\documentclass{amsart}
\usepackage{amsmath,amssymb, amsthm, amscd, relsize}
\input xy
\xyoption{all}

\newtheorem{theorem}{Theorem}[section]

\newtheorem{definition}[theorem]{Definition}
\newtheorem{lemma}[theorem]{Lemma}
\newtheorem{proposition}[theorem]{Proposition}
\newtheorem{example}[theorem]{Example}

\newcommand{\Hom}{\operatorname{Hom}}
\newcommand{\M}{\operatorname{M}}
\newcommand{\Ran}{\operatorname{Ran}}
\newcommand{\Span}{\operatorname{Span}}
\newcommand{\tr}{\operatorname{tr}}

\begin{document}

\title[Unitary equivalence]{Chern classes and unitary equivalence of normal matrices over topological spaces}
\author{Greg Friedman}\thanks{This work was partially supported by a grant from the Simons Foundation (\#839707 to Greg Friedman) } \author{Efton Park}
\address{Department of Mathematics, Box 298900, 
Texas Christian University, Fort Worth, TX 76129}
\email{g.friedman@tcu.edu}\email{e.park@tcu.edu}
\keywords{normal matrices, unitary equivalence, obstruction theory, characteristic classes, eigenvalues, eigenvectors}
\subjclass[2010]{Primary: 47B15, 55S35; Secondary: 55R40, 15A18}

\begin{abstract}
This paper continues the authors' work on the question of unitary equivalence of matrices with entries in the complex-valued functions of a topological space (matrices over spaces). Specifically, we here consider the question of unitary equivalence for pairs of normal matrices over a space that share a common characteristic polynomial that can be globally factored into distinct linear factors. We show that such a matrix  is diagonalizable if and only if the first Chern classes of its eigenbundles all vanish and derive as an application that all such matrices over $\mathbb{C}P^m$ are diagonalizable for $m > 1$.  Next, given a CW complex $X$ and a polynomial $\mu$ in $C(X)[\lambda]$ that globally splits into distinct linear factors,  we prove that the number of unitary equivalence classes of matrices with $\mu$ as a characteristic polynomial depends only on the space $X$ and the degree of $\mu$, and we give some estimates on how many unitary equivalence classes there can be.  In the case that $X$ is a CW complex of dimension at most three, we demonstrate a bijection between the unitary equivalence classes of $n \times n$ normal matrices with characteristic polynomial $\mu$ and elements of the group $(H^2(X))^{n-1}$.  Finally, when $X$ is a smooth manifold and we restrict to matrices with smooth entries, we construct a de Rham cohomology class whose nonvanishing is an obstruction to unitary equivalence.
\end{abstract}

\maketitle

One of the most celebrated theorems in linear algebra is the spectral theorem: every normal matrix with complex entries is diagonalizable.  More
precisely, the spectral theorem states that if $A \in \M(n, \mathbb{C})$ is normal, then there exists a unitary
matrix $U$ in $\M(n, \mathbb{C})$ such that $U^*AU$ is diagonal.  

In \cite{GP}, K. Grove and G. K. Pedersen considered the following generalization of the spectral theorem.  Suppose $X$ is a topological space, 
let $C(X)$ denote the $\mathbb{C}$-algebra of complex-valued continuous functions on $X$, 
and let $\M(n, C(X))$ be the ring of $n$-by-$n$ matrices with entries in $C(X)$.  We will refer to elements of $\M(n, C(X))$ as
matrices over $X$.  Note that one can alternately view an element $A$ in $\M(n, C(X))$ as a continuous function $A: X \rightarrow \M(n, \mathbb{C})$.
Thus we can define the adjoint $A^*$ of $A$ pointwise, and hence the notion of normality makes sense here.
Question: for which topological spaces $X$ and which normal matrices $A$ is $A$ diagonalizable over $X$?  In other words,
when does there exists a unitary $U$ in $\M(n, C(X))$ (i.e., $U^*U = UU^* = I$) such that $U^*AU$ is diagonal?  Grove and Pedersen
discovered that to guarantee diagonalizability on a reasonable class of topological spaces, one must impose an additional
condition on elements of $\M(n, C(X))$ beyond that of normality.  The additional restriction is that $A(x)$ have distinct eigenvalues of 
multiplicity one for each $x$ in $X$; we then say that $A$ is \emph{multiplicity free}.  Among other results, Grove and Pedersen proved
that if $X$ is a $2$-connected compact CW complex ($\pi_1(X)$ and $\pi_2(X)$ both trivial), then every normal multiplicity-free matrix over $X$ is 
diagonalizable (\cite{GP}, Theorem 1.4).

In \cite{FP}, the authors of this paper studied a related question.  Suppose $A$ and $B$ are normal multiplicity-free
matrices over $X$.  Under what conditions are $A$ and $B$ unitarily equivalent?  An obvious necessary condition is that 
$A$ and $B$ must have equal characteristic polynomials, but there are already examples in \cite{GP} that show this condition is not
sufficient in general.  In \cite{FP}, we constructed a cohomology class $\theta(A, B)$, living in a twisted 
cohomology group, that is a complete obstruction to unitary equivalence.  Specifically, we proved that if $A$ and $B$
are normal multiplicity-free matrices over a CW complex $X$ with the same characteristic polynomial, then $A$ and $B$ are unitarily
equivalent if and only if $\theta(A, B) = 0$. 

Let $\mu$ denote the common characteristic polynomial of $A$ and $B$, and suppose $\mu$ splits
over $C(X)$; i.e., suppose that
\[
\mu(\lambda) = \prod_{i=1}^n(\lambda - \lambda_i)
\]
for some continuous functions $\lambda_1, \lambda_2, \dots, \lambda_n$ from $X$ to the complex numbers.  In \cite{FP}, we showed
that in this case, the invariant $\theta(A, B)$ can be expressed as a direct sum of Chern classes.

In this current paper, we restrict our attention to questions involving unitary equivalence of normal multiplicity-free matrices
that have a characteristic polynomial that splits in the aforementioned way.  
\newline

In the first section of the paper,  we establish some terminology and notation for the matrices we consider. 

In Section 2, we prove that a matrix $A$ satisfying the conditions described above is diagonalizable if and only if
the first Chern classes of the eigenvector bundles of $A$ all vanish.  We then use this result to prove that such matrices
are always diagonalizable over $\mathbb{C}P^m$ for $m > 1$.  We also consider the possible values our invariant can realize
when $X$ is $S^2 \times S^2$; this employs Proposition 6.8 of \cite{FP}, which states that there is a bijection between unitary 
equivalence classes and the set of realizable diagonalizability obstructions.

In Section 3, we ask the following question: suppose $\mu$ is a polynomial of degree $n$ in $C(X)[\lambda]$ that splits over $C(X)$.
When does there exists a normal multiplicity-free matrix over $X$ whose characteristic polynomial is $\mu$?  More generally,
how many unitary equivalence classes are there of matrices with characteristic polynomial $\mu$?  We prove that the 
number of equivalence classes is independent of the choice of $\mu$, and thus we can define $\nu_n(X)$ to be the 
number of unitary equivalence classes of normal multiplicity-free matrices that have a given characteristic polynomial
that splits over $C(X)$.  We then give some estimates on the size of $\nu_n(X)$ in various situations.

In Section 4, we prove that in some situations, every possible value of our invariant is realized.  Specifically, we prove 
that if $X$ is a CW complex of dimension at most three and if $\mu$ is a multiplicity free polynomial of degree $n$ that splits over $C(X)$,
then there is a bijection between the set of unitary equivalence classes of $n\times n$ normal 
matrices with characteristic polynomial $\mu$ and elements of the group $(H^2(X))^{n-1}$.

Finally, in Section 5, we show that when $X$ is a smooth manifold and the entries of $A$ and $B$ are smooth complex-valued functions on $X$,
we can explicitly write down a closed two-form whose de Rham cohomology class is $\theta(A, B)$, up to torsion.  
\vskip 6pt

\emph{Acknowledgments}:  We have benefited from valuable conversations with Jim Fowler, George Gilbert, Scott Nollet, Ken Richardson,
 Loren Spice, and Gerard Venema.
 
\section{Preliminaries and Notation}\label{prelims}
Throughout this paper, $X$ will be a $CW$-complex, not necessarily compact.  We may and do assume that $X$ is connected, 
because we can work with each component of a disconnected space individually.   Let $A$ be an element of $\M(n, C(X))$.  As
described in the introduction, we impose the following conditions on $A$:
\begin{itemize}
\item the matrix $A$ is normal; i.e., it commutes with its pointwise complex conjugate transpose $A^*$;
\item the matrix $A$ is multiplicity-free; i.e., for each $x$ in $X$, the eigenvalues of $A(x)$ are distinct;
\item The characteristic polynomial $\mu_A = \det(A - \lambda I)$ of $A$ splits in $C(X)[\lambda]$; i.e., we can factor $\mu_A$ into
linear factors with coefficients in $C(X)$.
\end{itemize}

When $A$ satisfies all three of these conditions, then we can choose a continuous global ordering of the eigenvalues of $A$.  In other words,
there exist continuous functions $\lambda_1, \lambda_2, \dots, \lambda_n$ from $X$ to $\mathbb{C}$ such that 
$\{\lambda_1(x), \lambda_2(x), \dots, \lambda_n(x)\}$ is a complete set of eigenvalues of $A(x)$ for each $x$ in $X$.  
Another way of saying this is that the eigenvalues of $A$ always exhibit trivial monodromy around loops in $X$.  Theorem 1.6 in \cite{GL} implies
that if $A$ is multiplicity-free and $X$ is simply connected, then the characteristic polynomial $\mu_A$ of $A$ will split in $C(X)$.

When $\mu_A$ does split in $C(X)$, there is no preferred choice of ordering of its zeros (that is, the eigenvalues of $A(x)$ for each $x$ in $X$), 
so we choose one arbitrarily, and generally tacitly.  However once we have chosen an order, that order will remain fixed. Also, when two
matrices have the same characteristic polynomial, we will use the same order of eigenvalues for both matrices.
\vskip 6pt

Throughout the rest of the paper, we will assume that the three conditions listed above hold for our matrices $A$, unless we specifically
say otherwise.
\vskip 6pt

For each $1 \leq i \leq n$, define a polynomial $p_i \in C(X)[\lambda]$ by the formula
\[
p_i(\lambda) = 
\mathlarger{\prod}_{j \neq i}(\lambda_i - \lambda_j)^{-1}(\lambda - \lambda_j).
\]

Then $p_i(\lambda_i) = 1$, and $p_i(\lambda_j) = 0$ for $j \neq i$.  Therefore, by the functional calculus for normal operators,
\[\label{E: projection equation}
P_i(x):= p_i(A(x)) = \mathlarger{\prod}_{j \neq i}(\lambda_i(x) - \lambda_j(x))^{-1}(A(x) - \lambda_j(x)I) \tag{1}
\]
is the projection of $\mathbb{C}^n$ onto the $\lambda_i(x)$ eigenspace of $A(x)$. Furthermore, for each i, the family
$\{\Ran(P_i(x))\}$ forms a complex line bundle $V_i$ over $X$ (\cite{P}, Proposition 1.7.5), and if we let $\Theta^n(X)$ denote
the trivial rank $n$ vector bundle over $X$, then
\[
\bigoplus_{i=1}^n V_i \cong \Theta^n(X).
\]
Conversely, suppose we have a decomposition of $\Theta^n(X)$ into $n$ complex line bundles $V_1, V_2, \dots, V_n$,
and further suppose we have continuous functions $\lambda_1, \lambda_2, \dots, \lambda_n$ from $X$ to $\mathbb{C}$ 
with the feature that $\lambda_1(x), \lambda_2(x), \dots, \lambda_n(x)$ have distinct values for each $x$ in $X$.  Equip
$\Theta^n(X)$ with its standard Hermitian metric, let $P_i$ be the projection of $\Theta^n(X)$ onto $V_i$, and define
\[
A =\lambda_1 P_1 + \lambda_2 P_2 + \cdots + \lambda_n P_n.
\]
Note that $P_i^2 = P_i^* = P_i$ for all $1 \leq i \leq n$ and that $P_iP_j = P_jP_i = 0$ for all $i \neq j$.  Thus
\[
AA^* = |\lambda_1|^2P_1 + |\lambda_1|^2P_2 + \cdots + |\lambda_1|^2P_n = A^*A,
\]
whence $A$ is normal.  Also, the matrix $A(x)$ has eigenvalues $\lambda_1(x), \lambda_2(x), \dots, \lambda_n(x)$
that are distinct for each $x$ in $X$.
\newline

Suppose now we have an element $B$ of $\M(n, C(X))$ that satisfies our three conditions above and also has the same characteristic
polynomial as $A$.  We can write
\[
B  = \lambda_1Q_1 + \lambda_2Q_2 + \cdots + \lambda_nQ_n
\]
where $Q_i(x)$ is the projection onto the $\lambda_i$ eigenspace of $B(x)$, and each family of vector spaces $\{\Ran Q_i(x)\}$
defines a vector bundle $W_i$ over $X$.  In this setting, the cohomology class $\theta(A, B)$ from \cite{FP} that was mentioned in the introduction
can be written in terms of Chern classes (\cite{FP}, Proposition 7.1):
\[
\theta(A, B) = c_1\left(\Hom(V_1, W_1)\right) \oplus c_1\left(\Hom(V_2, W_2)\right) \oplus \cdots \oplus c_1\left(\Hom(V_n, W_n)\right).
\]

Throughout the rest of the paper, we will adopt the notation we have established in this section.

\section{Some examples}\label{examples}

\begin{proposition}\label{diagonal}
Suppose that $A \in \M(n, C(X))$ is normal and multiplicity-free and that the characteristic polynomial of $A$ splits over $C(X)$.  Choose
an ordering $\{\lambda_1, \lambda_2, \dots, \lambda_n\}$ for the eigenvalues of $A$, and let $D$ be the diagonal matrix
with diagonal entries $\lambda_1, \lambda_2, \dots, \lambda_n$.  Then
\[
\theta(D, A) = c_1(V_1) \oplus c_1(V_2) \oplus \cdots \oplus c_1(V_n).
\]
Thus $A$ is diagonalizable if and only if $V_1, V_2, \dots, V_n$ all have trivial first Chern class.
\end{proposition}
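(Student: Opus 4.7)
The plan is to derive the Chern class formula by specializing the main identity recalled just above the proposition — namely, that $\theta(A,B) = \bigoplus_i c_1(\Hom(V_i, W_i))$ (\cite{FP}, Proposition 7.1) — to the pair $(D, A)$, and then to invoke the completeness of $\theta$ as a unitary-equivalence obstruction to obtain the diagonalizability criterion.

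The first step is to identify the eigenbundles of $D$ in the chosen ordering. Since $D(x)$ is the diagonal matrix with entries $\lambda_1(x), \dots, \lambda_n(x)$, its $\lambda_i(x)$-eigenspace is simply the span of the $i$-th standard basis vector $e_i$, independently of $x$. These eigenspaces assemble into the constant, and hence globally trivial, line subbundles $\varepsilon_i$ of $\Theta^n(X)$. Substituting into the formula of \cite{FP} with the roles of $A$ and $B$ played by $D$ and $A$ gives
\[
\theta(D,A) = \bigoplus_{i=1}^n c_1\bigl(\Hom(\varepsilon_i, V_i)\bigr).
\]
Because $\varepsilon_i$ is trivial, $\Hom(\varepsilon_i, V_i) \cong \varepsilon_i^* \otimes V_i \cong V_i$ as complex line bundles, so $c_1(\Hom(\varepsilon_i, V_i)) = c_1(V_i)$, yielding the stated identity.

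For the diagonalizability claim, I combine the formula with the completeness result of \cite{FP}: $A$ and $D$ are unitarily equivalent iff $\theta(D,A)=0$. The one subtlety is that, a priori, $A$ being \emph{diagonalizable} only says that $U^*AU$ equals \emph{some} diagonal matrix $D'$, whose diagonal entries need only be a set-theoretic permutation of the $\lambda_i$ at each $x$. However, since the eigenvalues are distinct at each point and $X$ is connected, continuity forces the rearrangement to be given by a single permutation $\sigma$ of $\{1,\dots,n\}$, independent of $x$. Conjugating $D'$ by the associated (constant) permutation matrix produces $D$, so diagonalizability is equivalent to unitary equivalence with $D$, and hence to the simultaneous vanishing of all the $c_1(V_i)$.

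I do not anticipate any real technical difficulty; the heart of the argument is a direct substitution into the cited Chern class formula, and the simplification $\Hom(\varepsilon_i, V_i) \cong V_i$ is automatic from triviality of $\varepsilon_i$. The only careful point is the ordering issue in the final paragraph, which is precisely why both the multiplicity-free hypothesis and (our standing assumption of) connectedness of $X$ are used.
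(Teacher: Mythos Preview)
Your proof is correct and follows essentially the same route as the paper's: identify the eigenbundles of $D$ as the trivial line subbundles spanned by the standard basis vectors, use $\Hom(\Theta^1(X),V_i)\cong V_i$, and then invoke the results of \cite{FP}. The one addition in your version---the paragraph handling the ordering ambiguity between ``diagonalizable'' and ``unitarily equivalent to this particular $D$''---is a genuine clarification that the paper's proof leaves implicit.
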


\begin{proof}
The $\lambda_i(x)$ eigenspace of $D$ is the subspace of $\mathbb{C}^n$ spanned by the vector that is 1 in the $i$th slot and zero elsewhere, and
therefore the corresponding vector subbundle is (isomorphic to) the trivial line bundle $\Theta^1(X)$.  
We note that $\Hom(\Theta^1(X), V_i) \cong V_i$ for each $1 \leq i \leq n$, whence the first claim of the proposition follows.  
Proposition 7.1 in \cite{FP} establishes the second claim.
\end{proof}

\begin{definition}\label{obstructionsdef}
The Chern classes $c_1(V_1), c_1(V_2), \dots, c_1(V_n)$ in Proposition \ref{diagonal} are called the 
\emph{obstructions to diagonalizability} for $A$.
\end{definition}

\begin{lemma}\label{relations}
For $k > 0$, the elementary symmetric polynomials $s_k$ evaluated at $c_1(V_1), c_1(V_2), \dots, c_1(V_n)$ vanish.
\end{lemma}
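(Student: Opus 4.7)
The plan is to deduce the vanishing of the elementary symmetric polynomials $s_k(c_1(V_1),\dots,c_1(V_n))$ from the fact, established in the preliminaries, that $\bigoplus_{i=1}^n V_i \cong \Theta^n(X)$. Since $\Theta^n(X)$ is trivial, its total Chern class $c(\Theta^n(X))$ equals $1 \in H^*(X)$.

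I would then apply the Whitney sum formula to obtain
\[
1 = c\!\left(\bigoplus_{i=1}^n V_i\right) = \prod_{i=1}^n c(V_i).
\]
Because each $V_i$ is a complex line bundle, its total Chern class reduces to $c(V_i) = 1 + c_1(V_i)$. Expanding the product
\[
\prod_{i=1}^n \bigl(1 + c_1(V_i)\bigr) = 1 + \sum_{k=1}^n s_k\bigl(c_1(V_1),\dots,c_1(V_n)\bigr),
\]
and matching terms of each (even) cohomological degree with the corresponding summand on the left-hand side, I would read off that $s_k(c_1(V_1),\dots,c_1(V_n))=0$ for every $k>0$. This is the desired conclusion.

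There is no serious obstacle here: the argument is essentially a formal consequence of the splitting of $\Theta^n(X)$ together with the multiplicativity of Chern classes. The only small point worth noting explicitly is that the grading works out, because each $c_1(V_i)$ lies in $H^2(X)$ and the sum of the degree-$2k$ parts of $\prod(1+c_1(V_i))$ is exactly $s_k(c_1(V_1),\dots,c_1(V_n))$, so vanishing in each positive degree yields vanishing of each $s_k$ individually.
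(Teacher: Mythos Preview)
Your proof is correct and follows essentially the same approach as the paper: both use the triviality of $\bigoplus_{i=1}^n V_i \cong \Theta^n(X)$, apply the Whitney sum formula, expand $\prod_{i=1}^n(1+c_1(V_i))$ as $1+\sum_{k=1}^n s_k(c_1(V_1),\dots,c_1(V_n))$, and read off the vanishing of each $s_k$. Your explicit remark about matching cohomological degrees is a nice clarification, but otherwise the arguments are identical.
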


\begin{proof}
For each $i$, let $c(V_i)$ denote the total Chern class of $V_i$.  
Because each $V_i$ is a line bundle, we have that $c(V_i) = 1 + c_1(V_i)$.
Applying the Whitney product formula \cite[Formula 14.7]{MS},

\begin{multline*}
1 = c(\Theta^n(X)) = c\bigl(\oplus_{i=1}^n V_i\bigr) = \prod_{i=1}^n c(V_i)\\
= \prod_{i=1}^n (1+c_1(V_i)) = 1 + \sum_{k=1}^n s_k\left(c_1(V_1), c_1(V_2), \dots c_1(V_n)\right).
\end{multline*}
\end{proof}

\begin{proposition}\label{CPm}
Suppose that $A$ in $\M(n, C(\mathbb{C}P^m))$ is normal and multiplicity-free and that $m > 1$.  Then $A$ is diagonalizable.
\end{proposition}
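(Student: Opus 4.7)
The plan is to combine Proposition~\ref{diagonal} with Lemma~\ref{relations} and exploit the explicit structure of the cohomology ring $H^*(\mathbb{C}P^m;\mathbb{Z}) \cong \mathbb{Z}[t]/(t^{m+1})$, where $t$ is the standard generator in degree $2$. By Proposition~\ref{diagonal}, it suffices to show that the first Chern classes of all the eigenbundles $V_1,\dots,V_n$ vanish. Since each $c_1(V_i)$ lives in $H^2(\mathbb{C}P^m;\mathbb{Z}) = \mathbb{Z}\cdot t$, I can write $c_1(V_i) = a_i t$ for integers $a_1,\dots,a_n$, and the goal reduces to showing that every $a_i$ equals $0$.

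Next I apply Lemma~\ref{relations} with $k=1$ and $k=2$. The $k=1$ relation says $\sum_i c_1(V_i) = 0$ in $H^2$, which (since $t$ generates a free summand) yields $\sum_i a_i = 0$ as an integer identity. The $k=2$ relation says $\sum_{i<j} c_1(V_i)\, c_1(V_j) = 0$ in $H^4(\mathbb{C}P^m;\mathbb{Z})$; here the hypothesis $m > 1$ is essential, because it guarantees that $t^2$ is a nonzero free generator of $H^4(\mathbb{C}P^m;\mathbb{Z})$, so the equation forces $\sum_{i<j} a_i a_j = 0$ as an integer identity.

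Finally, I invoke the elementary identity
\[
\Bigl(\sum_{i=1}^n a_i\Bigr)^2 = \sum_{i=1}^n a_i^2 + 2\sum_{i<j} a_i a_j.
\]
The left side vanishes by the $k=1$ relation, and the second sum on the right vanishes by the $k=2$ relation, so $\sum_i a_i^2 = 0$. Since the $a_i$ are real (indeed integers), this forces $a_i = 0$ for every $i$, and hence $c_1(V_i) = 0$ for every $i$. By Proposition~\ref{diagonal}, $A$ is diagonalizable.

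There is no real obstacle here; the only subtlety is the role of the hypothesis $m > 1$, which enters precisely to ensure that the $s_2$ relation is nontrivial in $H^4$. (For $m=1$ we lose the $s_2$ constraint and are left only with $\sum a_i = 0$, consistent with the fact that over $\mathbb{C}P^1 = S^2$ there are genuine nondiagonalizable examples.)
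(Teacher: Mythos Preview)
Your proof is correct and essentially identical to the paper's: both use Lemma~\ref{relations} for $k=1,2$ together with the identity $s_1^2-2s_2=\sum x_i^2$ and the ring structure $H^*(\mathbb{C}P^m)\cong\mathbb{Z}[t]/(t^{m+1})$ (with $m>1$ making $t^2$ nonzero) to force $\sum a_i^2=0$. The only minor omission is that you should note $\mathbb{C}P^m$ is simply connected, so the characteristic polynomial of $A$ automatically splits over $C(\mathbb{C}P^m)$ and Proposition~\ref{diagonal} applies.
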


\begin{proof}
Because $\mathbb{C}P^m$ is simply connected, the characteristic of polynomial of $A$ splits over $\mathbb{C}P^m$.
Proposition \ref{diagonal} states that we need only show that $c_1(V_i) = 0$ for $1 \leq i \leq n$.

Note that 
\[
(s_1(x_1, x_2, \dots, x_n))^2 - 2s_2(x_1, x_2, \dots, x_n) = x_1^2 + x_2^2 + \cdots x_n^2.
\]
Thus, in light of Lemma \ref{relations}, we see that
\[
(c_1(V_1))^2 + (c_1(V_2))^2 + \cdots + (c_1(V_n))^2 = 0.
\]
The cohomology ring $H^*(\mathbb{C}P^m)$ is isomorphic to $\mathbb{Z}[\alpha]/\alpha^{m+1}$ (\cite[Example 3.40]{Ha}), 
and identifying these two rings via this isomorphism, we observe that for each $i$ 
we have $c_1(V_i)=k_i\alpha$ for some integer $k_1$.   Hence
\[
0=\sum_{i=1}^n (c_1(V_i))^2=\sum_{i=1}^n (k_i\alpha)^2=\left(\sum_{i=1}^nk_i^2\right)\alpha^2\in H^4(\mathbb{C}P^m).
\]
Because $m>1$, the class $\alpha^2$ is a generator of $H^4(\mathbb{C}P^m)\cong \mathbb{Z}$, and therefore
all the integers $k_i$ are zero.  Therefore $c_1(V_i) = 0$ for all $1 \leq i \leq n$.
\end{proof}

By contrast, for $m = 1$, Example 7.2 in \cite {FP} gives examples of normal multiplicity-free matrices whose characteristic
polynomials split, but are not diagonalizable.  We will have more to say about matrices over $\mathbb{C}P^1 = S^2$ in Section \ref{low}.

\begin{example}\label{2by2}
$2\times 2$ multiplicity-free normal matrices on $S^2\times S^2$
\end{example}

We prove that there is a bi-infinite family of unitary equivalence classes of $2\times 2$ normal matrices over 
$S^2\times S^2$ for any multiplicity-free characteristic polynomial.   We begin by observing that $S^2\times S^2$ is simply connected.
Let $\alpha$ denote a generator of $H^2(S^2) \cong \mathbb{Z}$. 
Define $\bar \alpha=\alpha\times 1$ and  $\bar \beta=1\times \alpha$ in $H^2(S^2\times S^2)$. 
Then $\bar \alpha$ and $\bar \beta$ are the standard generators of $H^2(S^2\times S^2)\cong \mathbb{Z}^2$, and  
$\bar \alpha\cup\bar\beta=(\alpha\times 1)\cup(1\times \alpha)=\alpha\times \alpha$ is a generator of $H^4(S^2\times S^2)$, 
while $\bar \alpha\cup \bar \alpha=\bar \beta\cup \bar \beta=0$.  
 
Suppose that $A \in \M(2, S^2 \times S^2)$ is normal and multiplicity free.  Then
\begin{align*}
c_1(V_1) &= k_1\bar \alpha+\ell_1\bar \beta \\
c_1(V_2) &= k_2\bar \alpha+\ell_2\bar \beta
\end{align*}
for some integers $k_1$, $k_2$, $\ell_1$, and $\ell_2$.
By Lemma \ref{relations},
\[
s_1(c_1(V_1), c_1(V_2)) = c_1(V_1)+c_2(V_2)=0,
\]
whence 
\begin{align*}
0 &= c_1(V_1)+c_2(V_2)\\
&= k_1\bar \alpha+\ell_1\bar \beta+k_2\bar \alpha+\ell_2\bar \beta\\
&=( k_1+k_2)\bar \alpha+(\ell_1+\ell_2)\bar \beta,
\end{align*}
and so $k_2=-k_1$, $\ell_2=-\ell_1$. 
Next, because
\[
s_2(c_1(V_1), c_1(V_2)) = c_1(V_1)c_1(V_2)=0,
\]
we see that 
\begin{align*}
0 &= c_1(V_1)c_1(V_2)=(k_1\bar \alpha+\ell_1\bar \beta)(-k_1\bar \alpha-\ell_1\bar \beta)\\
& =(-2k_1\ell_1)\bar \alpha\cup \bar \beta. 
\end{align*}
Thus one of $k_1$ or $\ell_1$ must be $0$.  Therefore the possible Chern classes for $V_1$ and $V_2$ are
$c_1(V_1)=k\bar \alpha$ and $c_2(V_2)=-k\bar \alpha$, or else $c_1(V_1)=\ell\bar \beta$ and $c_1(V_2)=-\ell\bar \beta$. 
We will show that for any multiplicity-free characteristic polynomial on $S^2\times S^2$, we can realize all such classes for any 
integers $k$ and $\ell$. 

In Example 7.2 of \cite{FP}, we constructed examples of $2 \times 2$ matrices over $\mathbb{C}P^1\cong S^2$ with
diagonalizability obstructions $c_1(V_1) = k\alpha$, $c_1(V_2) = - c_1(V_1) = -k\alpha$.
Our construction here will generalize that construction. First, we note that every map $f: S^2\to \mathbb{C}P^1$ determines a line 
bundle over $S^2$ that can be considered a subbundle of the trivial bundle $\Theta^2(S^2)$ (just let the complex line over $x \in S^2$ be $f(x)$). 
Equivalently, this line bundle is $f^*\gamma^1$, the pullback by $f$ of the tautological line bundle $\gamma^1$ of $\mathbb{C}P^1$. 
We know that the Chern class of the tautological line bundle generates $H^2(\mathbb{C}P^2)$ \cite[Theorem 14.4]{MS}, so if we let
$\alpha$ be this generator and let $f$ be a degree $k$ map, then $c_1(f^*\gamma^1)=k\alpha$. 

Next, let $\pi: S^2\times S^2 \rightarrow S^2$ be the projection to the first copy of the 2-sphere.
Then $\pi^*f^*\gamma^1=(f\pi)^*\gamma^1$ is a line bundle over 
$S^2\times S^2$ and a subbundle of $\pi^*(\Theta(S^2)) \cong \Theta^2(S^2 \times S^2)$.
Furthermore, by naturality of the Chern classes,
\[
c_1((f\pi)^*\gamma^1)=\pi^*c_1(f^*\gamma^1)=\pi^*(k\alpha)=k\bar \alpha. 
\]
Endow $\Theta(S^2 \times S^2)$ with its standard Hermitian structure, let $P_1$ denote the projection of $\Theta(S^2 \times S^2)$ 
onto $(f\pi)^*\gamma^1$, and define $P_2 = I - P_1$.  

Now, let $\mu$ be any multiplicity-free degree two polynomial in $C(S^2\times S^2)[\lambda]$.  Designate the roots of $\mu$ as
$\lambda_1$ and $\lambda_2$;  we can do this because $S^2\times S^2$ is simply connected.
Set
\[
A = \lambda_1P_1 + \lambda_2P_2.
\]
Then $A$ is a normal multiplicity-free matrix over $S^2 \times S^2$ with characteristic polynomial $\mu$, and by Proposition \ref{diagonal},
the diagonalizablity obstructions of $A$ are $k\bar \alpha$ and $-k\bar \alpha$. A similar procedure with projection 
to the second component gives us matrices with diagonalizability obstructions $\ell\bar \beta$ and $-\ell\bar \beta$.  
Proposition 6.8 of \cite{FP} states that there is a bijection between unitary equivalence classes  and the set of realizable diagonalizability
obstructions, so this gives us the desired result.
\newline

The situation already becomes significantly more complicated in the example above if $n = 3$.   Suppose that $V_1$, $V_2$, and $V_3$ are line bundles
over $S^2 \times S^2$ and that $V_1 \oplus V_2 \oplus V_3$ is trivial.  Then
\begin{align*}
c_1(V_1) &= k_1\bar \alpha+\ell_1\bar \beta \\
c_1(V_2) &= k_2\bar \alpha+\ell_2\bar \beta \\
c_1(V_3) &= k_3\bar \alpha+\ell_3\bar \beta
\end{align*}
for some integers $k_1$, $k_2$, $k_3$, $\ell_1$, $\ell_2$, and $\ell_3$.  The equation $s_1(c(V_1), c(V_2), c(V_3)) = 0$ implies
that
\[
k_1 + k_2 + k_3 = \ell_1 + \ell_2 + \ell_3 = 0,
\]
and the equation $s_2(c(V_1), c(V_2), c(V_3)) = 0$ yields
\[
k_1\ell_2 + k_2\ell_1 + k_1\ell_3 + k_3\ell_1+ k_2\ell_3 + k_3\ell_2 = 0. \tag{$\star$}
\]
Thus $k_3$ is determined by $k_1$ and $k_2$, and $\ell_3$ is determined by $\ell_1$ and $\ell_2$.  Plugging this information
into the equation above gives us the following
necessary condition on $k_1$, $k_2$, $\ell_1$, and $\ell_2$:
\[
k_1(2\ell_1 + \ell_2) + k_2(\ell_1 + 2\ell_2) = 0.
\]
We do not know precisely which integers satisfying this equation can actually be realized
in our expressions for the Chern classes for $V_1$, $V_2$, and $V_3$.

\section{How many unitary equivalence classes?}\label{classes}

We have been considering multiplicity-free normal matrices $A$ over a $CW$-complex $X$ with the 
property that the characteristic polynomial $\mu_A$ splits over $C(X)$.  Suppose we have a polynomial $\mu$ in $C(X)[\lambda]$
that splits.  When is it true that $\mu = \mu_A$ for some multiplicity-free matrix $A$ over $X$?  Of course, any matrix 
unitarily equivalent to $A$ has the same characteristic polynomial as $A$, so the question is better phrased this way: Given such
a polynomial $\mu$, how many unitary equivalence classes of multiplicity-free normal matrices are there with characteristic polynomial
$\mu$?  We have the following somewhat surprising result:

\begin{proposition}\label{charpoly}
Suppose that $X$ is a $CW$ complex and let $\mu, \widetilde{\mu} \in C(X)[\lambda]$ be multiplicity-free polynomials that split over $C(X)$ and have 
the same degree.  Then the number of unitary equivalence classes of normal matrices over $X$ with characteristic polynomial $\mu$ is equal to the
number of unitary equivalence class of normal matrices over $X$ with characteristic polynomial $\widetilde{\mu}$.
\end{proposition}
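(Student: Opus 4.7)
The plan is to construct an explicit bijection between the two sets of unitary equivalence classes by swapping the eigenvalue functions while keeping the spectral projections fixed. Using the orderings established in Section~\ref{prelims}, write $\mu(\lambda) = \prod_{i=1}^n(\lambda - \lambda_i)$ and $\widetilde\mu(\lambda) = \prod_{i=1}^n(\lambda - \widetilde\lambda_i)$. Given a normal multiplicity-free $A \in \M(n, C(X))$ with $\mu_A = \mu$, let $P_1, \dots, P_n$ be the spectral projections produced by \eqref{E: projection equation}, and define
\[
\Phi(A) := \widetilde\lambda_1 P_1 + \widetilde\lambda_2 P_2 + \cdots + \widetilde\lambda_n P_n.
\]
The identities $P_i^* = P_i = P_i^2$, $P_i P_j = 0$ for $i \ne j$, and $\sum_i P_i = I$ carry over unchanged from $A$, so $\Phi(A)$ is normal and $P_i(x)$ is the orthogonal projection onto the $\widetilde\lambda_i(x)$-eigenspace of $\Phi(A)(x)$. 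In particular $\Phi(A)$ is multiplicity-free with characteristic polynomial $\widetilde\mu$.

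Next I would verify that $\Phi$ respects unitary equivalence. If $B = U^* A U$ for some unitary $U \in \M(n, C(X))$, then $B = \sum_i \lambda_i (U^* P_i U)$, and by the pointwise uniqueness of spectral projections for matrices with pairwise distinct eigenvalues, $U^* P_i U$ is the projection onto the $\lambda_i$-eigenspace of $B$. Hence $\Phi(B) = U^*\Phi(A)U$, so $\Phi$ descends to a well-defined map $\bar\Phi$ between the two sets of unitary equivalence classes in the statement.

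Finally I would produce an inverse by the symmetric construction $\Psi(\widetilde A) := \sum_i \lambda_i Q_i$, where $Q_1,\dots,Q_n$ are the spectral projections of $\widetilde A$ associated to the eigenvalues $\widetilde\lambda_1,\dots,\widetilde\lambda_n$. Because the $P_i$ used to assemble $\Phi(A)$ are precisely the spectral projections of $\Phi(A)$ for the $\widetilde\lambda_i$, we have $\Psi(\Phi(A)) = \sum_i \lambda_i P_i = A$ on the nose, and symmetrically $\Phi(\Psi(\widetilde A)) = \widetilde A$. Thus $\bar\Phi$ is a bijection and the two counts coincide.

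I do not expect a serious obstacle. The conceptual input is that, after fixing an ordered splitting of the characteristic polynomial, a normal multiplicity-free matrix is determined by its ordered tuple of spectral projections, a bundle-theoretic datum completely decoupled from the particular values of the eigenvalues. The only technical point requiring care is the pointwise uniqueness of spectral projections when the eigenvalues are distinct, which is what makes the unitary-conjugation check above work cleanly.
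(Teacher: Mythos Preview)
Your argument is correct and is essentially identical to the paper's own proof: both replace the eigenvalue functions $\lambda_i$ by $\widetilde\lambda_i$ while keeping the spectral projections $P_i$ fixed, verify that unitary conjugation passes through this swap, and appeal to the symmetric construction to invert. Your write-up is in fact slightly more explicit in naming the maps $\Phi,\Psi$ and in isolating the pointwise uniqueness of spectral projections as the reason $\Phi(B)=U^*\Phi(A)U$.
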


\begin{proof} 
Suppose that $A$ is a normal matrix in $\M(n, C(X))$ that has characteristic polynomial $\mu$, and let
$\lambda_1(x), \lambda_2(x), \dots, \lambda_n(x)$ be a continuous ordering of the roots of $\mu(x)$ as $x$ ranges over $X$.
Then we can write $A$ in the form 
\[
A = \lambda_1 P_1 + \lambda_2 P_2 + \cdots + \lambda_n P_n.
\]
Let $\widetilde{\lambda}_1(x), \widetilde{\lambda}_2(x), \dots, \widetilde{\lambda}_n(x)$ be a continuous ordering of the roots of $\widetilde{\mu}(x)$. Then
\[
\widetilde{A} = \widetilde{\lambda}_1 P_1 + \widetilde{\lambda}_2 P_2 + \cdots + \widetilde{\lambda}_n P_n
\]
is a normal matrix over $C(X)$ with characteristic polynomial $\widetilde{\mu}$. Reversing the roles of $\mu$ and $\widetilde{\mu}$, we see
that $A \longleftrightarrow \widetilde{A}$ is a one-to-one correspondence between normal matrices with characteristic polynomial $\mu$
and normal matrices with characteristic polynomial $\widetilde{\mu}$.  

Now suppose that $B$ has the same characteristic polynomial as $A$ and that $B = U^*AU$ for some unitary matrix $U$
in $\M(n, C(X))$.  Then
\begin{align*}
B &= U^*(\lambda_1 P_1 + \lambda_2 P_2 + \cdots + \lambda_n P_n)U \\
&= \lambda_1 U^*P_1U + \lambda_2 U^*P_2U + \cdots + \lambda_nU^*P_nU,
\end{align*}
while
\begin{align*}
\widetilde{B} := U^*\widetilde{A}U &= U^*(\widetilde{\lambda}_1 P_1 + \widetilde{\lambda}_2 P_2 + \cdots + \widetilde{\lambda}_n P_n)U \\
&= \widetilde{\lambda}_1 U^*P_1U + \widetilde{\lambda}_2 U^*P_2U + \cdots + \widetilde{\lambda}_nU^*P_nU.
\end{align*}
Again switching the roles of $\mu$ and $\widetilde{\mu}$, we obtain the desired result.
\end{proof}

In light of Proposition \ref{charpoly}, we can make the following definition.

\begin{definition}
Let $\nu_n(X)$ be the number of unitary equivalence classes of $n\times n$ matrices over $X$ with a given multiplicity-free
characteristic polynomial that splits over $C(X)$.
\end{definition}

Thanks to Proposition 6.8 in \cite{FP}, we can alternatively define $\nu_n(X)$ to be the number of diagonalization obstructions for $n$-by-$n$
matrices over $X$.

\begin{proposition}\label{splitting}
Suppose $f: Y \rightarrow X$ is a map of $CW$ complexes that possesses a splitting $s$, i.e. a map $s:X \rightarrow Y$ such that
$fs:X \rightarrow X$ is the identity. Then $\nu_n(Y)\geq \nu_n(X)$. 
\end{proposition}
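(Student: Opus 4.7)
The plan is to exhibit an injection from the set of unitary equivalence classes of matrices with a fixed characteristic polynomial over $X$ into the corresponding set over $Y$, using pullback along $f$ and exploiting the splitting $s$ to invert it. By Proposition \ref{charpoly}, the value of $\nu_n$ does not depend on which splitting multiplicity-free characteristic polynomial we choose, so we are free to select a convenient one on each side.

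First I would fix a multiplicity-free polynomial $\mu(\lambda) = \prod_{i=1}^n(\lambda - \lambda_i) \in C(X)[\lambda]$ that splits over $C(X)$, and set $\widetilde\mu(\lambda) = \prod_{i=1}^n(\lambda - \lambda_i\circ f)\in C(Y)[\lambda]$. Since the $\lambda_i\circ f$ are continuous and, for each $y \in Y$, take the distinct values $\lambda_i(f(y))$, the polynomial $\widetilde\mu$ is itself multiplicity-free and splits over $C(Y)$. Pullback $A \mapsto A\circ f$ then sends any normal multiplicity-free $A \in \M(n,C(X))$ with characteristic polynomial $\mu$ to a normal multiplicity-free matrix over $Y$ with characteristic polynomial $\widetilde\mu$: normality follows from $(A\circ f)^* = A^*\circ f$ together with pointwise commutativity, and the multiplicity-free condition is immediate from $(A\circ f)(y) = A(f(y))$. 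Moreover, if $B = U^*AU$ for a unitary $U\in \M(n,C(X))$, then $B\circ f = (U\circ f)^*(A\circ f)(U\circ f)$ and $U\circ f$ is unitary over $Y$, so pullback descends to a well-defined map on unitary equivalence classes.

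The crucial ingredient is the splitting $s$: suppose $A\circ f$ and $B\circ f$ are unitarily equivalent over $Y$ via some unitary $V \in \M(n,C(Y))$, so that $B\circ f = V^*(A\circ f)V$. Composing on the right with $s$ and using $f\circ s = \mathrm{id}_X$ yields $B = (V\circ s)^*A(V\circ s)$, with $V\circ s$ a unitary matrix over $X$. Hence $A$ and $B$ are already unitarily equivalent over $X$, so the induced pullback on equivalence classes is injective, giving $\nu_n(X) \leq \nu_n(Y)$. The argument is essentially formal once one recognizes that all the relevant data --- normality, multiplicity-freeness, the splitting of the characteristic polynomial, and unitary equivalence --- are preserved under pullback along continuous maps; there is no substantive obstacle beyond this routine verification.
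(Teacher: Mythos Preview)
Your proof is correct and takes a genuinely different route from the paper's. The paper argues at the level of diagonalizability obstructions: it pulls back $A$ to $f^*A$, observes that the obstruction $\oplus_i c_1(V_i)$ pulls back to $\oplus_i f^*c_1(V_i)$, and then uses the splitting to show that $f^*:H^2(X)\to H^2(Y)$ is injective (since $s^*f^*=\mathrm{id}$), concluding via the bijection between unitary equivalence classes and realizable obstructions. Your argument bypasses the cohomological invariant entirely and works directly with matrices: pullback along $f$ induces a map on unitary equivalence classes, and pullback along $s$ provides a one-sided inverse at the level of unitary equivalences, yielding injectivity. Your approach is more elementary and self-contained, needing only Proposition~\ref{charpoly} rather than the Chern-class machinery and the cited result from \cite{FP}; the paper's approach, on the other hand, keeps the argument within the obstruction-theoretic framework that governs the rest of the section and makes the role of $H^2$ explicit.
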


\begin{proof}
Let $A$ be an $n\times n$ multiplicity-free normal matrix over $X$ whose characteristic polynomial splits over $C(X)$, and
let $f^*A$ be the matrix on $Y$ with $(f^*A)(y)=A(f(y))$. Then $f^*A$ is normal and multiplicity free and its characteristic polynomial
splits over $C(Y)$.  The obstruction to diagonalizing $A$ over $X$ is $\oplus_{i=1}^nc_1(V_i)$, and due to the naturality of pullbacks and Chern
classes, the obstruction to diagonalizing $f^*A$ is 
\[
\bigoplus_{i=1}^nc_1(f^*V_i) = \bigoplus_{i=1}^nf^*c_1(V_i).
\]
The composition $fs$ is the identity, so $(fs)^*=s^*f^*:H^2(X) \rightarrow H^2(X)$ is also the identity, and so $f^*:H^2(X) \rightarrow H^2(Y)$ is injective. 
Thus there are at least as many realizable obstructions to diagonalizability over $Y$ as there are over $X$.
\end{proof}

\begin{proposition}\label{nuproduct}
Suppose that $X$, $Y$ are $CW$ complexes whose homology groups are finitely generated in each dimension.  Then
\[
\nu_n(X\times Y)\geq \nu_n(X)+\nu_n(Y)-1.
\]
\end{proposition}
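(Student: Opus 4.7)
The plan is to pull back obstruction data from $X$ and from $Y$ separately via the two projections $\pi_X\colon X\times Y\to X$ and $\pi_Y\colon X\times Y\to Y$, and to show that the two resulting sets of obstructions in the realizable-obstruction set for $X\times Y$ intersect only in the zero class, so that a simple inclusion-exclusion count yields the inequality.

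First I would invoke Proposition~6.8 of \cite{FP} (as noted just before Proposition~\ref{splitting}) to identify $\nu_n(Z)$ with the cardinality of the set $\mathcal{O}_n(Z)\subset \bigoplus_{i=1}^n H^2(Z)$ of realizable diagonalization obstructions, that is, elements of the form $\bigoplus_{i=1}^n c_1(V_i)$ arising from decompositions $\Theta^n(Z)\cong \bigoplus_{i=1}^n V_i$. Then, after picking basepoints $x_0\in X$ and $y_0\in Y$, the inclusions $s_X(x)=(x,y_0)$ and $s_Y(y)=(x_0,y)$ split $\pi_X$ and $\pi_Y$ respectively, so Proposition~\ref{splitting} (applied once for each projection) provides injections $\pi_X^*\colon \mathcal{O}_n(X)\hookrightarrow \mathcal{O}_n(X\times Y)$ and $\pi_Y^*\colon \mathcal{O}_n(Y)\hookrightarrow \mathcal{O}_n(X\times Y)$.

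The main step is then to show $\pi_X^*\mathcal{O}_n(X)\cap \pi_Y^*\mathcal{O}_n(Y)$ consists only of the zero obstruction, which is realized in each image set by the pullback of a diagonal matrix. Suppose $\pi_X^*(a)=\pi_Y^*(b)$ for classes $a\in H^2(X)$ and $b\in H^2(Y)$; applying $s_X^*$ yields
\[
a=s_X^*\pi_X^*(a)=s_X^*\pi_Y^*(b)=(\pi_Y\circ s_X)^*(b)=0,
\]
since $\pi_Y\circ s_X$ is the constant map at $y_0$ and therefore induces the zero homomorphism on positive-degree cohomology. Symmetrically $b=0$. Applying this componentwise across the direct sums establishes the trivial-intersection claim.

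By inclusion-exclusion one then concludes
\[
\nu_n(X\times Y)\geq |\pi_X^*\mathcal{O}_n(X)\cup \pi_Y^*\mathcal{O}_n(Y)|=\nu_n(X)+\nu_n(Y)-1.
\]
There is no real obstacle to the argument; the one subtle point is the trivial-intersection calculation via the section-projection dance. The finitely-generated hypothesis on homology is not actually needed for this section-based approach, though it would be the natural hypothesis if one instead argued via the K\"unneth decomposition of $H^2(X\times Y)$.
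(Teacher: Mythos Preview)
Your argument is correct and follows the same overall architecture as the paper's proof: pull back realizable obstructions along the two projections, show the two images in $\mathcal{O}_n(X\times Y)$ meet only in the zero class, and count via inclusion--exclusion. The difference lies in how the trivial-intersection step is justified. The paper invokes the K\"unneth theorem to see that $H^2(X)\oplus H^2(Y)$ sits as a direct summand of $H^2(X\times Y)$ via the cross-product maps $\alpha\mapsto\alpha\times 1_Y$ and $\beta\mapsto 1_X\times\beta$, and reads off both the injectivity of $\pi_X^*,\pi_Y^*$ and the triviality of the intersection from that decomposition; this is precisely why the finitely-generated-homology hypothesis is imposed. Your section-projection computation $(\pi_Y\circ s_X)^*=0$ accomplishes the same thing more directly and, as you note, without any finiteness assumption. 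So your route is marginally more elementary and yields a slightly stronger statement, while the paper's K\"unneth argument has the virtue of making the ambient structure of $H^2(X\times Y)$ explicit.
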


\begin{proof}
By the K\"unneth theorem (\cite{MKat}, Theorem 60.5) the group $H^2(X)\oplus H^2(Y)$ is a direct summand of $H^2(X\times Y)$. 
In particular, the maps $\cdot \times 1:H^2(X) \rightarrow H^2(X\times Y)$ and $1\times \cdot:H^2(Y) \rightarrow H^2(X\times Y)$ are injective 
and the intersection of these two subgroups is $0$. Now we precede as in the preceding proposition.
Let $A$ be an $n\times n$ multiplicity-free normal matrix over $X$ whose characteristic polynomial splits over $C(X)$.
 Let $\pi:X\times Y\to X$ be the projection map, and define $s:X \rightarrow Y$ to be the inclusion $s(x)=(x,y_0)$, where $y_0$ is a
 fixed chosen basepoint for $Y$.  
By Theorem 61.2 in \cite{MKat}, the obstruction to diagonalizing $\pi^*A$ is
\[
\bigoplus_{i=1}^nc_1(\pi^*V_i)=\bigoplus_{i=1}^n\pi^*c_1(V_i)=\bigoplus_{i=1}^nc_1(V_i)\times 1_Y.
\]
Because $\cdot\times 1_Y$ is injective by the K\"unneth theorem, there are thus at least as many realizable obstructions to 
diagonalizability of this form in $X\times Y$ as there are obstructions to diagonalizability of the form 
$\oplus_{i=1}^nc_1(V_i)$ over $X$, which is just $\nu_n(X)$. 

This argument works equally well by interchanging the roles of $X$ and $Y$, and we know from the injectivity of the K\"unneth theorem 
that a class of the form $\oplus_{i=1}^n\alpha_i\times 1_Y$ equals a class of the form $\oplus_{i=1}^n1_X\times \beta_i$ only if all the 
$\alpha_i$ and $\beta_i$ are equal to $0$. So there is only one unitary equivalence class that can be pulled back from either $X$ or $Y$, 
and this is the class of the diagonalizable matrices, which are certainly the pullbacks of the diagonalizable matrices over $X$ and $Y$.
\end{proof}

\section{Low dimensions}\label{low}

Our goal in this section is to prove the following theorem. 

\begin{theorem}\label{lowdim}
Let $X$ be a CW complex with $\dim(X)\leq 3$, and let $\mu \in C(X)[\lambda]$ be a  multiplicity free polynomial of degree $n$ that 
splits over $C(X)$. There is a bijection between the set of unitary equivalence classes of $n\times n$ normal matrices with characteristic
polynomial  $\mu$ and elements of the group $(H^2(X))^{n-1}=\displaystyle \oplus_{i=1}^{n-1} H^2(X)$. 
\end{theorem}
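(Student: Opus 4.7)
The plan is to reduce Theorem~\ref{lowdim} to a description of the set of realizable diagonalizability obstructions. By Proposition 6.8 of \cite{FP}, unitary equivalence classes of normal multiplicity-free matrices with characteristic polynomial $\mu$ are in bijection with the set of $n$-tuples $(c_1(V_1), \ldots, c_1(V_n))\in \bigoplus_{i=1}^n H^2(X)$ that actually arise as diagonalizability obstructions of some such matrix. It therefore suffices to identify this realizable set with the subgroup $\{(a_1,\ldots,a_n):\sum a_i = 0\}$ of $\bigoplus_{i=1}^n H^2(X)$, which is isomorphic to $(H^2(X))^{n-1}$ via projection onto the first $n-1$ coordinates.

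For the necessity direction, I would apply Lemma~\ref{relations}: for every realizable tuple, each elementary symmetric polynomial $s_k(c_1(V_1),\ldots,c_1(V_n))$ vanishes. The value $s_k$ lives in $H^{2k}(X)$, which is zero for $2k>3$, so the hypothesis $\dim(X)\leq 3$ makes every constraint with $k\geq 2$ automatic. The only surviving relation is $s_1=\sum_{i=1}^n c_1(V_i)=0$, placing every realizable tuple in the prescribed subgroup.

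For the realizability direction, the plan is to construct, for each tuple $(a_1,\ldots,a_n)$ with $\sum a_i=0$, line bundles $V_i$ with $c_1(V_i)=a_i$ whose direct sum is the trivial bundle. The first step is standard: complex line bundles over a CW complex are classified by $H^2(X;\mathbb{Z})$, so line bundles $V_i$ with the prescribed Chern classes exist. The crux is then to show that $V:=V_1\oplus\cdots\oplus V_n\cong \Theta^n(X)$. Granting this, one obtains projections $P_i$ of $\Theta^n(X)$ onto $V_i$; combining them with any continuous ordering $\lambda_1,\ldots,\lambda_n$ of the roots of $\mu$ (which exists because $\mu$ splits over $C(X)$), the construction described at the end of Section~\ref{prelims} yields a normal multiplicity-free matrix $A=\sum \lambda_i P_i$ with characteristic polynomial $\mu$ whose diagonalizability obstructions are exactly $(a_1,\ldots,a_n)$.

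The main obstacle is thus the triviality of $V$. For $n=1$ there is nothing to check. For $n\geq 2$, $V$ is a complex vector bundle of rank $\geq 2$ over a CW complex of dimension at most $3$, and the standard Postnikov analysis of $BU(n)$ should do the job: one has $\pi_1(BU(n))=0$, $\pi_2(BU(n))=\mathbb{Z}$, and $\pi_3(BU(n))=0$, so for a CW complex $X$ with $\dim(X)\leq 3$, the map $[X,BU(n)]\to H^2(X;\mathbb{Z})$ given by $f\mapsto c_1(\xi_f)$ is a bijection; the only possibly nontrivial higher obstruction class lies in $H^4(X;\mathbb{Z})=0$. Since $c_1(V)=\sum a_i=0$, we conclude $V\cong \Theta^n(X)$, completing the construction and the proof.
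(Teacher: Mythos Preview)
Your argument is correct and takes a genuinely different, considerably shorter route than the paper's. Both proofs share the reduction via Proposition~6.8 of \cite{FP} and the necessity direction via Lemma~\ref{relations}. The divergence is in the realization step. The paper builds the $V_i$ \emph{as explicit subbundles} of $\Theta^n(X)$: it classifies each line bundle by a map $f_i:X\to\mathbb{C}P^{n-1}$, then uses a general position argument (Lemma~\ref{CPmanifold} computing the codimension of the ``dependent'' locus $S_{k+1}\subset U_k\times\mathbb{C}P^{n-1}$, Lemma~\ref{triangle} on compatible triangulations, and PL general position) to homotope $(f_1,\ldots,f_{n-1})$ so that the resulting lines are pointwise independent; this is Proposition~\ref{bundles}. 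You instead pick abstract $V_i$ with the prescribed $c_1$, form $V=\bigoplus V_i$, and invoke the low-dimensional classification of complex vector bundles: since $\pi_1(BU(n))=\pi_3(BU(n))=0$ and $H^4(X)=0$, the map $c_1:[X,BU(n)]\to H^2(X)$ is a bijection for $n\ge 2$, so $c_1(V)=\sum a_i=0$ forces $V\cong\Theta^n(X)$. Your approach is cleaner and highlights exactly which homotopy-theoretic fact makes $\dim X\le 3$ the right hypothesis; the paper's approach is more hands-on and yields an explicit geometric picture of the eigenbundles.

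One small point you should address: the isomorphism $V\cong\Theta^n(X)$ gives a direct-sum decomposition of $\Theta^n(X)$ into line subbundles, but not an \emph{orthogonal} one, and the construction at the end of Section~\ref{prelims} needs orthogonality for $A=\sum\lambda_iP_i$ to be normal (it uses $P_i^*=P_i$ and $P_iP_j=0$ simultaneously). This is harmless---apply Lemma~\ref{orthogonal} (Gram--Schmidt) to replace your decomposition by an orthogonal one with the same Chern classes---but it should be said.
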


We establish some notation.   Elements of $\prod_{i=1}^k \mathbb{C}P^m$ can be written in the form $(x_1, x_2, \ldots, x_k)$, where each
$x_i$ is a complex line in $\mathbb{C}^{m+1}$.  Let $\Span(x_1, x_2, \ldots, x_k)$ 
denote the span of the vectors contained in the union of the $x_i$ for $1 \leq i \leq k$.

\begin{lemma}\label{CPmanifold}
For each $k\leq m$, let $U_k$ be the open subset consisting of points $(x_1, x_2, \dots, x_k)$ in $\prod_{i=1}^k \mathbb{C}P^m$ 
such that $\Span(x_1,\ldots, x_k)$ has (complex) dimension $k$. 
Let $S \subset U_k\times \mathbb{C}P^m$ consist of those $(x_1,\ldots, x_{k+1})\in U_k\times \mathbb{C}P^m$ such that 
$x_{k+1}$ is in $\Span(x_1,\ldots, x_k)$. Then $S$ is a smooth submanifold of $U_k\times \mathbb{C}P^m$ 
of real dimension $2(mk+k-1)$. 
\end{lemma}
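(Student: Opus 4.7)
The plan is to realize $S$ as the projectivization of a smooth rank-$k$ complex vector subbundle $E$ of the trivial bundle $U_k\times \mathbb{C}^{m+1}$ whose fiber at $(x_1,\ldots,x_k)$ is $\Span(x_1,\ldots,x_k)$. Once this is set up, $S=\mathbb{P}(E)$ sits inside $\mathbb{P}(U_k\times \mathbb{C}^{m+1})=U_k\times \mathbb{C}P^m$ as a smooth submanifold, and is a smooth fiber bundle over $U_k$ with fiber $\mathbb{C}P^{k-1}$. The dimension count is then immediate:
\[
\dim_{\mathbb{R}} S \;=\; \dim_{\mathbb{R}} U_k + \dim_{\mathbb{R}} \mathbb{C}P^{k-1} \;=\; 2mk + 2(k-1) \;=\; 2(mk+k-1).
\]

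To construct $E$, I would let $\gamma\to \mathbb{C}P^m$ denote the tautological line bundle, viewed as a smooth line subbundle of the trivial bundle $\mathbb{C}P^m\times \mathbb{C}^{m+1}$, and let $\pi_i\colon U_k\to \mathbb{C}P^m$ be the projection to the $i$-th factor. Each pullback $\pi_i^*\gamma$ is a smooth line subbundle of $U_k\times \mathbb{C}^{m+1}$ whose fiber at $(x_1,\ldots,x_k)$ is the line $x_i$. The fiberwise sum map
\[
\Phi\colon \bigoplus_{i=1}^k \pi_i^*\gamma \longrightarrow U_k\times \mathbb{C}^{m+1},\qquad (\ell_1,\ldots,\ell_k)\longmapsto \ell_1+\cdots+\ell_k,
\]
is a smooth bundle homomorphism that, by the very definition of $U_k$, is injective on every fiber with image of constant complex rank $k$. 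Hence $E:=\Phi\bigl(\bigoplus_{i=1}^k \pi_i^*\gamma\bigr)$ is a smooth rank-$k$ subbundle of $U_k\times \mathbb{C}^{m+1}$: any choice of nowhere-vanishing local sections $\sigma_i$ of $\pi_i^*\gamma$ assembles into a local frame $(\sigma_1,\ldots,\sigma_k)$ for $E$, the linear independence being automatic on $U_k$.

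The identification $S=\mathbb{P}(E)$ is then tautological: a point $(x_1,\ldots,x_{k+1})$ lies in $S$ precisely when the line $x_{k+1}$ is contained in $E_{(x_1,\ldots,x_k)}=\Span(x_1,\ldots,x_k)$, equivalently when it determines a point of $\mathbb{P}(E_{(x_1,\ldots,x_k)})$. Trivializing $E$ locally and extending to a trivialization of $U_k\times \mathbb{C}^{m+1}$, this inclusion becomes, over a small open set $U\subset U_k$, the standard linear embedding $U\times \mathbb{C}P^{k-1}\hookrightarrow U\times \mathbb{C}P^m$, which is manifestly a smooth submanifold of the target.

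The only step that requires any thought is verifying that the image of $\Phi$ has locally constant rank, which is what upgrades it from a set-theoretic image to a smooth subbundle. This is precisely the content of the condition cutting out $U_k$ from $\prod_{i=1}^k \mathbb{C}P^m$, so nothing more is needed. Everything else is naturality of pullback, linear independence persisting under small perturbations, and standard facts about projectivizations of smooth vector bundles.
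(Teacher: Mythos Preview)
Your argument is correct and takes a genuinely different route from the paper's. The paper works locally with explicit affine charts, applies Gram--Schmidt to the first $k$ coordinate vectors to build a moving orthonormal frame, and then defines a smooth map $F\colon W\to\mathbb{C}^{m+1-k}$ recording the component of $\phi_{k+1}(x)$ orthogonal to $\Span(\phi_1(x),\ldots,\phi_k(x))$; after checking that $DF$ has full rank at the chosen point, the implicit function theorem gives the submanifold structure and the dimension. Your approach instead packages everything globally: you recognize that the fiberwise span defines a rank-$k$ subbundle $E\subset U_k\times\mathbb{C}^{m+1}$ (as the image of the constant-rank sum map from $\bigoplus_i\pi_i^*\gamma$), and then observe $S=\mathbb{P}(E)\hookrightarrow U_k\times\mathbb{C}P^m$, which is a smooth submanifold by the standard local-frame argument you sketch. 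Your route is shorter and makes the $\mathbb{C}P^{k-1}$-bundle structure of $S\to U_k$ immediately visible, which is conceptually pleasant; the paper's route is more elementary in that it avoids appealing to the constant-rank theorem for bundle maps and to general facts about projectivizations, at the cost of a longer computation. Both give the same dimension count $2mk+2(k-1)=2(mk+k-1)$.
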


\begin{proof}
Throughout the following argument we will use complex coordinates and complex vector space operations for clarity and convenience.
However, when talking about differentiability, we will mean differentiability of the \emph{real} components (i.e. the real and imaginary parts)
of complex functions with respect to the underlying \emph{real} coordinates. In particular, bear in mind that all of our spaces and
manifolds of complex dimension $n$ are also regarded as real spaces of dimension $2n$. 

It suffices to prove the claim locally.  For this, we build up to an application of the implicit function theorem.  We start by choosing a point
$p$ in $S$ and then constructing a ``moving frame'' that assigns to each point in a neighborhood of $p$ in $U_k \times \mathbb{C}P^m$
a certain basis of $\mathbb{C}^m$.

Fix $p=(p_1,p_2, \dots, p_{k+1})$ in $S$, and form a coordinate chart around $p$ consisting of 
inhomogeneous (affine) coordinates as follows: Recall that for any $y \in \mathbb{C}P^m$, which is represented in homogeneous
coordinates $[y_1, y_2, \dots, y_{m+1}]$, we can choose some $j$ such that $y_j \neq 0$ and identify $y$ with the coordinates
\[
(y_1/y_j, y_2/y_j, \dots, \hat{1}_j, \dots y_{m+1}/y_j)
\]
in $\mathbb{C}^{m}$, where $\hat 1_j$ denotes that we are omitting
the $j$th coordinate, which is $1$.  If we let $C_j$ denote the open subset of points in $\mathbb{C}P^m$ with non-zero $j$th homogeneous
coordinate, then we can treat this construction as a map from $C_j$ to $\mathbb{C}^m$, and this provides a chart; this is the standard way to provide 
a manifold atlas for $\mathbb{C}P^m$. It will be useful to think of $\mathbb{C}^m$ as embedded in $\mathbb{C}^{m+1}$ as the plane 
with $j$ coordinate equal to $1$.
Following this procedure in each coordinate, we get a map $\phi$ from a neighborhood $W$ of $p$ in $U_k\times \mathbb{C}P^m$ to 
$(\mathbb{C}^{m+1})^{k+1}$. Explicitly, this map takes  $x=(x_1,x_2, \dots, x_{k+1})$  in $W$ to 
$\phi(x) = (\phi_1(x_1), \phi_2(x_2), \dots, \phi_{k+1}(x_{k+1}))$, where
\[
\phi_i(x_i) = \left(\frac{x_{i,1}}{x_{i,j(i)}},\dots, 1_{j(i)},\dots,\frac{x_{i,m+1}}{x_{1,j(i)}}\right)
\]
and $j(i)$ is the chosen non-zero coordinate for $x_i$.   Notice that each $\phi_i(x_i)$ is a point in the complex line spanned
by $x_i$ 
so 
\[
\Span(x_1, x_2, \dots, x_k) = \Span(\phi_1(x_1), \phi_2(x_2), \dots, \phi_k(x_k)).
\]
By definition of $U_k$, this span is always $k$-dimensional.  Also, because $p$ is a point in $S$, we have
$\phi_{k+1}(p)$ in $\Span(\phi_1(p), \phi_2(p), \dots, \phi_k(p))$ by definition.
Let $v_{k+1},\dots, v_{m+1}$ be a collection of vectors in $\mathbb{C}^{m+1}$ such that 
\[
\Span(\phi_1(p), \phi_2(p), \dots, \phi_k(p), v_{k+1},\dots, v_{m+1}) = \mathbb{C}^{m+1}.
\]
In other words, the $v_\ell$, for $k < \ell \leq m+1$, complete the set 
$\{\phi_i(p)\}_{i=1}^k$ to a basis of $\mathbb{C}^{m+1}$. Because the $v_\ell$ are fixed vectors, by possibly making $W$ smaller, we can take a neighborhood $W$ of $p$ in $U_k\times \mathbb{C}P^m$ such that
$\Span(\phi_1(x),\phi_2(x), \dots, \phi_k(x), v_{k+1},\dots, v_{m+1})$ is a basis of $\mathbb{C}^{m+1}$ for all $x$ in $W$. 

Next, we perform the Gram-Schmidt process as a function of $x$ to obtain orthonormal bases of $\mathbb{C}^{m+1}$ at each
$x$ in $W$ with its first $k$ coordinates in the set $\Span(\phi_1(x), \phi_2(x), \dots, \phi_k(x))$. Let
\begin{align*}
e_1(x) &= \frac{\phi_1(x)}{|\phi_1(x)|}\\
&\vdots\\
e_k(x)&= \frac{\phi_k(x) - \sum_{i=1}^{k-1}\frac{\phi_k(x)\cdot e_i(x)}{|e_i(x)|^2}e_i(x)}
{\left| \phi_k(x) - \sum_{i=1}^{k-1}\frac{\phi_k(x)\cdot e_i(x)}{|e_i(x)|^2}e_i(x)\right|}\\
e_{k+1}(x)&= \frac{v_{k+1}-\sum_{i=1}^{k}\frac{v_{k+1}\cdot e_i(x)}{|e_i(x)|^2}e_i(x)}
{\left| v_{k+1}-\sum_{i=1}^{k}\frac{v_{k+1}\cdot e_i(x)}{|e_i(x)|^2}e_i(x)\right|}\\
&\vdots\\
e_{m+1}(x)&= \frac{v_{m+1}-\sum_{i=1}^{m}\frac{v_{m+1}\cdot e_i(x)}{|e_i(x)|^2}e_i(x)}
{\left| v_{m+1}-\sum_{i=1}^{m+1}\frac{v_{k+1}\cdot e_i(x)}{|e_i(x)|^2}e_i(x)\right|}.
\end{align*}
Because (the real components of) the functions $\phi_i$ depend smoothly on $x$ by the definition of charts,
so do the (real components of) $e_i(x)$. And clearly we have
\[
\Span(e_1(x),\dots,e_j(x)) = \Span(\phi_1(x), \phi_2(x), \dots,\phi_j(x))
\]
for any $j\leq k$.  These $e_j$ constitute our ``moving frame.''

Next, we define $P$ to be the projection function that takes $x\in W$ to the projection of $\phi_{k+1}(x)$ onto the orthogonal subspace
of $\Span(\phi_1(x), \phi_2(x), \dots, \phi_k(x))$. This function is simply 
\[
P(x) = \phi_{k+1}(x)-\sum_{i=1}^k \frac{\phi_{k_1}(x)\cdot e_i(x)}{|e_i(x)|^2}e_i(x),
\]
which is also smooth. 

For $k+1\leq j\leq m+1$, let $f_j(x)=P(x)\cdot e_j(x)$, the $j$th coordinate of $P(x)$ in the basis $\{e_i(x)\}$. 
Let $F:W \rightarrow \mathbb{C}^{m+1-k}$ be the smooth function $F(x) = (f_{k+1}(x),\dots,f_{m+1}(x))$. 
Altogether, for $x$ in $W$, the image $F(x)$ represents the coordinates of a  projection of $\phi_{k+1}(x)$ to the subspace of
$\mathbb{C}^{m+1}$ orthogonal to $\Span(\phi_1(x),\phi_2(x), \dots, \phi_k(x))$,
though we have to allow the basis with respect to which the coordinates are chosen to vary with $x$.  The key to our entire construction
is the observation that $F(x)=0$ if and only if 
\[
\phi_{k+1}(x) \in \Span(\phi_1(x), \phi_2(x), \dots, \phi_k(x)),\]
which corresponds to $x\in S$. 

We next establish that the differential of $F$ has maximal (real) rank at $p$ by showing that every $v$ in 
$\Span(v_{k+1}, \dots v_{m+1})$ is in the image of $Df$.  At $p$, the orthogonal subspace to 
$\Span(\phi_1(p), \phi_2(p), \dots, \phi_k(p))$ is spanned by the fixed vectors $v_{k+1},\dots, v_{m+1}$, which are orthogonal to 
$\phi_{k+1}(p)$.  As above, let $j(k+1)$ be the index of $\phi_{k+1}(p)$ set to $1$ in our coordinate system.  For any vector $v$ in
$\Span(v_{k+1},\dots, v_{m+1})$, consider the curve $\phi_{k+1}(p) + tv$, $t\in \mathbb{R}$, $|t|$ small.
Let $J(\phi_{k+1}(p)+tv)$ be the $j(k+1)$ coordinate of $\phi_{k+1}(p)+tv$, which is non-zero for sufficiently small $t$, say $|t| < \epsilon$.
Then for $|t|<\epsilon$, we have a smooth curve
\[
\gamma(t) = \frac{\phi_{k+1}(p)+tv}{J(\phi_{k+1}(p)+tv)}
\]
with $\gamma(0) = \phi_{k+1}(p)$. Abbreviating the denominator to $J(t)$ (so $J(0)=1$), we can write this as 
$\gamma(t)=\frac{\phi_{k+1}(p)}{J(t)}+\frac{t}{J(t)}v$. But $\frac{\phi_{k+1}(p)}{J(t)}$ remains in 
$\Span(\phi_1(p), \phi_2(x), \dots, \phi_k(p))$ (because $\phi_{k+1}(p)$ is), while $\frac{t}{J(t)}v$ is a scalar multiple of $v$. 
It follows from the definition that $F(\phi_1(p),\phi_2(p), \dots, \phi_k(p), \gamma(t)) = \frac{t}{J(t)}v$. Furthermore,
\[
\frac{d}{dt}\left(\frac{t}{J(t)}\right)=\frac{J(t)-tJ'(t)}{(J(t))^2}.
\]
At $t=0$, this is $1$. Altogether then, 
\[
\frac{d}{dt}(F(\phi_1(p), \phi_2(p) \dots, \phi_k(p), \gamma(t)))|_{t=0} = v,
\]
and so each $v$ is in the image of the differential of $F$ at $p$. Therefore $F$ has real rank $2(m+1-k)$.

It now follows from the implicit function theorem for manifolds \cite[Corollary II.7.4]{Br} that, in a neighborhood of $p$, 
the zero set of $F$ (which we have established is $S$) is a smooth submanifold. But $p\in S$ was arbitrary, so $S$ is a smooth
submanifold of $U_k\times \mathbb{C}P^m$.  As the source of $F$ has real dimension $2(mk+m)$ and the target is 
$\mathbb{C}^{m+1-k} \cong \mathbb{R}^{2(m+1-k)}$, the real dimension of $S$ is 
\[
2(mk+m-(m+1-k))=2(km+k-1),
\]
as desired. 
\end{proof}

We also need the following lemma, which seems to be well known but  is not so easily pinpointed in the literature in clear form with proof, 
so we provide an argument utilizing the work of Verona on triangulations of stratifications and stratified maps. This is also essentially
\cite[Problem 10.4]{MKdt}, but we prefer not to cite an unworked exercise.

\begin{lemma}\label{triangle}
Let $N$ be a proper smooth submanifold of a smooth manifold $M$.  Then there is a triangulation of $M$ as a $PL$ manifold such that $N$ 
is triangulated as a PL submanifold.
\end{lemma}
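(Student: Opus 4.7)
The plan is to view the pair $(M,N)$ as an abstract stratified space in the sense of Thom--Mather, with exactly two strata: the closed stratum $N$ and the open stratum $M\setminus N$. A tubular neighborhood of the submanifold $N$ in $M$ supplies the required control data (a tube function, a retraction onto $N$, and the compatibility between them), and the Whitney regularity conditions are automatic since the only incident pair of strata is $(M\setminus N, N)$ with $N$ smoothly embedded. Properness of the submanifold ensures that $N$ is closed in $M$, so the stratification is locally finite and closed.

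Next I would apply Verona's triangulation theorem for abstract stratified spaces, which produces a triangulation of $M$ in which each stratum is an open union of open simplices; in particular, $N$ appears as a subcomplex of the triangulation of $M$. This is the main input I would import from the literature; the paper's reference to Verona's monograph is precisely aimed at this step.

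Finally, I would upgrade the conclusion from ``subcomplex'' to ``PL submanifold.'' Because $M$ and $N$ are smooth manifolds, the restrictions of the triangulation to each stratum are smooth triangulations in the sense of Whitehead, and the classical theorems of Cairns and Whitehead say that a smooth triangulation of a smooth manifold gives a PL-manifold structure, uniquely up to PL isomorphism. The local model of $N$ inside $M$ coming from the tubular neighborhood then guarantees that the link-pair at each simplex of $N$ is the standard PL sphere pair, so $N$ sits inside $M$ as a PL submanifold.

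The main obstacle is essentially bookkeeping: one has to verify that the hypotheses of Verona's theorem are met by the two-stratum decomposition $(M\setminus N, N)$ and then that the resulting simplicial decomposition is a PL-\emph{manifold} triangulation, not just a combinatorial one. Both follow quickly once a tubular neighborhood of $N$ is fixed, but stating them precisely requires careful unpacking of the Thom--Mather framework. An alternative route, avoiding stratification theory altogether, would be to triangulate $N$ smoothly, extend the triangulation across a tubular neighborhood of $N$ using a simplicial decomposition of the disk bundle that is standard on each fiber, and then extend to all of $M$ by Whitehead's smooth triangulation theorem applied to the complement; I would keep this as a backup in case the Verona citation needed to be avoided.
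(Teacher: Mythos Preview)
Your proposal is correct and follows essentially the same route as the paper: treat $(M,N)$ as a two-stratum Thom--Mather stratified space using the tubular neighborhood theorem to supply the tube data, invoke Verona's triangulation theorem to obtain a smooth triangulation in which $N$ is a subcomplex, and then appeal to Whitehead's theorem that a $C^1$ triangulation of a smooth manifold is combinatorial (PL). The paper's argument is terser---it does not spell out the verification of the Thom--Mather axioms or the link-pair check, and it cites only Whitehead rather than Cairns--Whitehead---but the ingredients and logical structure are identical.
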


Here, ``proper'' means that the embedding $N$ into $M$ is proper, i.e. the intersection of any compact subset of $M$ with $N$ is a compact subset of $N$.

\begin{proof}
We can consider the pair $(M,N)$ to be an abstract (Thom-Mather) stratified space using the tubular neighborhood theorem 
\cite[Theorem 2.3.3]{Wa} to provide the tube structure around $N$. The manifold pair can then be smoothly triangulated by \cite[Theorem 7.8]{V}.
By \cite[Theorem 5]{Wh}, any $C^1$ triangulation of a manifold is a combinatorial (PL manifold) triangulation, in that every 
vertex has a link that is combinatorially equivalent to a sphere. 
\end{proof}

\begin{lemma}\label{orthogonal}
Suppose that $\widetilde{V}_1, \widetilde{V}_2, \dots, \widetilde{V}_n$ are line bundles over $X$ with the property that
$\bigoplus_{i=1}^n \widetilde{V}_i = \Theta^n(X)$.  Endow $\Theta^n(X)$ with its standard Hermitian structure.
Then there exist pairwise orthogonal line bundles 
$V_1, V_2, \dots, V_n$ such that $\bigoplus_{i=1}^n V_i = \Theta^n(X)$ and
 $c_1(V_i) = c_1(\widetilde{V}_i)$ for all $1 \leq i \leq n$.
\end{lemma}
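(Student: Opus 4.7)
The plan is to construct the $V_i$ by a Gram--Schmidt type procedure applied to the bundles $\widetilde V_i$ themselves (viewed as subbundles of $\Theta^n(X)$ via the given direct sum decomposition), rather than to individual vectors. The key observation is that orthogonalizing a direct summand does not change the cumulative span, which is what lets the induction proceed.

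First I would set $V_1 := \widetilde V_1$. Then inductively, suppose I have already constructed pairwise orthogonal line subbundles $V_1, \dots, V_k$ of $\Theta^n(X)$ with $V_i \cong \widetilde V_i$ and
\[
V_1 \oplus \cdots \oplus V_k = \widetilde V_1 \oplus \cdots \oplus \widetilde V_k
\]
as subbundles of $\Theta^n(X)$. Letting $Q_k \colon \Theta^n(X) \to (V_1 \oplus \cdots \oplus V_k)^{\perp}$ denote orthogonal projection with respect to the standard Hermitian metric, define $V_{k+1} := Q_k(\widetilde V_{k+1})$. The crucial point is that $Q_k|_{\widetilde V_{k+1}}$ is fiberwise injective: the kernel of $Q_k$ is $V_1 \oplus \cdots \oplus V_k$, which by the inductive hypothesis equals $\widetilde V_1 \oplus \cdots \oplus \widetilde V_k$, and this intersects $\widetilde V_{k+1}$ trivially on each fiber because of the direct sum decomposition $\bigoplus_i \widetilde V_i = \Theta^n(X)$. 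Hence $V_{k+1}$ is a line subbundle of $(V_1 \oplus \cdots \oplus V_k)^{\perp}$ isomorphic to $\widetilde V_{k+1}$, automatically giving both orthogonality to the earlier $V_i$ and the identity $c_1(V_{k+1}) = c_1(\widetilde V_{k+1})$. To continue the induction, note that any $w \in \widetilde V_{k+1}$ decomposes as $w = Q_k(w) + (w - Q_k(w))$ with $Q_k(w) \in V_{k+1}$ and $w - Q_k(w) \in V_1 \oplus \cdots \oplus V_k$; combined with the inductive span identity, this gives the analogous identity at level $k+1$ by a routine double inclusion.

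Iterating up to $k = n$ produces the desired orthogonal decomposition. There is no real obstacle here: the only point requiring any attention is that fiberwise injectivity of $Q_k|_{\widetilde V_{k+1}}$ yields a genuine line subbundle (i.e.\ locally constant rank), which is standard for continuous bundle maps between line bundles. As a fallback, the same conclusion follows from a polar-decomposition argument: endow $\bigoplus \widetilde V_i$ with the orthogonal direct sum of the Hermitian metrics restricted from $\Theta^n(X)$, apply fiberwise polar decomposition to the bundle isomorphism $\bigoplus \widetilde V_i \to \Theta^n(X)$ to obtain a unitary isomorphism, and take the images of the $\widetilde V_i$ as the $V_i$.
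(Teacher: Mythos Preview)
Your proposal is correct and follows essentially the same Gram--Schmidt-on-subbundles approach as the paper: both define $V_k$ as the orthogonal complement of $V_1 \oplus \cdots \oplus V_{k-1}$ inside $\widetilde V_1 \oplus \cdots \oplus \widetilde V_k$ (your $Q_{k-1}(\widetilde V_k)$ is exactly this subspace). The only minor difference is that you verify $c_1(V_k) = c_1(\widetilde V_k)$ via the explicit bundle isomorphism $Q_{k-1}|_{\widetilde V_k}$, whereas the paper deduces it from the Whitney product formula applied to the common partial sum $\bigoplus_{i\le k} V_i = \bigoplus_{i\le k} \widetilde V_i$.
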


\begin{proof}
Set $V_1 = \widetilde{V}_1$, and suppose that for $k\geq 2$ we have chosen line bundles $V_1, V_2, \dots, V_{k-1}$ such that 
\begin{enumerate}
\item $c_1(V_i) = c_1(\widetilde{V}_i)$ for all $1 \leq i < k$;
\item $\bigoplus_{i=1}^{k-1}V_i = \bigoplus_{i=1}^{k-1}\widetilde{V}_i$;
\item $V_1, V_2, \dots, V_{k-1}$ are pairwise orthogonal.
\end{enumerate}

The bundle $\bigoplus_{i=1}^{k-1}V_i = \bigoplus_{i=1}^{k-1}\widetilde{V}_i$ is a rank $k - 1$ subbundle of $\bigoplus_{i=1}^k\widetilde{V}_i$.
Let $V_k$ be the orthogonal complement of $\bigoplus_{i=1}^{k-1}V_i$ in $\bigoplus_{i=1}^k\widetilde{V}_i$.  Then $V_k$ is a line bundle
that is orthogonal to each $V_i$ with $1 \leq i < k$, and it is immediate that $\bigoplus_{i=1}^kV_i = \bigoplus_{i=1}^k\widetilde{V}_i$.

To check the Chern class conditions, observe by the Whitney product formula that
\[
\sum_{i=1}^k c_1(\widetilde{V}_i) =c_1\left(\bigoplus_{i=1}^k \widetilde{V}_i\right) 
= c_1\left(\bigoplus_{i=1}^k V_i\right) = \sum_{i=1}^k c_1(V_i).
\]
Because $c_1(V_i)=c_1(\widetilde{V}_i)$ for $1 \leq i < k$, it follows that $c_1(V_k)=c_1(\widetilde{V}_k)$. 
The lemma follows by induction.
\end{proof}

\begin{proposition}\label{bundles}
Suppose that $X$ is a $CW$ complex with $\dim(X) \leq 3$, and let $\alpha_1, \alpha_2, \dots, \alpha_n$ be elements
of $H^2(X)$ with $\alpha_1 + \alpha_2 + \cdots + \alpha_n = 0$.  Then there exist line bundles $V_1, V_2, \dots, V_n$
over $X$ such that
\begin{enumerate}
\item $c_1(V_i) = \alpha_i$ for $1 \leq i \leq n$;
\item $\bigoplus_{i=1}^nV_i = \Theta^n(X)$, the trivial bundle of rank $n$;
\item the bundles are pairwise orthogonal.
\end{enumerate}
\end{proposition}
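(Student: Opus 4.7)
By Lemma \ref{orthogonal}, it suffices to produce line bundles $\widetilde V_1,\dots,\widetilde V_n$ satisfying only (1) and (2); that lemma will then supply pairwise orthogonal replacements with the same Chern classes and the same direct sum. The case $n=1$ is trivial, since $\alpha_1=0$ and the trivial line bundle works, so I assume $n\geq 2$ throughout the rest of the plan.

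My plan is to realize each $\widetilde V_i$ concretely as a line subbundle of a trivial bundle, by pulling back the tautological line bundle along a suitable map to projective space. Set $m=n$. Since $m\geq 2$ and $\dim X\leq 3\leq 2m$, obstruction theory, together with the fact that $\mathbb{C}P^m$ models $K(\mathbb{Z},2)$ through dimension $2m$, gives a bijection $[X,\mathbb{C}P^m]\cong H^2(X)$. I can therefore choose, for each $i$, a map $f_i\colon X\to \mathbb{C}P^m$ with $f_i^*\alpha=\alpha_i$, where $\alpha$ is the standard generator of $H^2(\mathbb{C}P^m)$. Setting $\widetilde V_i:=f_i^*\gamma^1$ yields a line subbundle of $f_i^*\Theta^{m+1}\cong \Theta^{m+1}(X)$ with $c_1(\widetilde V_i)=\alpha_i$.

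Next, I arrange the lines $f_1(x),\dots,f_n(x)$ to be linearly independent at every $x\in X$, so that $\bigoplus_i \widetilde V_i$ becomes a well-defined rank-$n$ subbundle of $\Theta^{m+1}(X)$. Proceed by induction on $k=1,\dots,n-1$: assuming $(f_1,\dots,f_k)(X)\subset U_k$, perturb $f_{k+1}$ so that the joint map $(f_1,\dots,f_{k+1})\colon X\to U_k\times\mathbb{C}P^m$ misses the bad set $S$ of Lemma \ref{CPmanifold}. By that lemma, $S$ is a smooth submanifold of real codimension $2(m-k+1)\geq 4$; by Lemma \ref{triangle}, one can equip $U_k\times\mathbb{C}P^m$ with a PL triangulation containing $S$ as a PL subcomplex. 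Standard PL general position for maps from a CW complex of dimension $\leq 3$ into such a triangulated manifold then produces the required perturbation through a homotopy, which in particular does not change the homotopy class of $f_{k+1}$ nor the Chern class $c_1(\widetilde V_{k+1})=\alpha_{k+1}$.

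After this induction, $\bigoplus_i \widetilde V_i$ is a rank-$n$ subbundle of $\Theta^{m+1}(X)$ whose first Chern class is $\sum_i\alpha_i=0$. Since $n\geq 2$, the space $B\mathrm{SU}(n)$ is $3$-connected, so the fibration $B\mathrm{SU}(n)\to BU(n)\to K(\mathbb{Z},2)$ makes $c_1\colon [X,BU(n)]\to H^2(X)$ a bijection whenever $\dim X\leq 3$; hence $\bigoplus_i \widetilde V_i\cong \Theta^n(X)$. An application of Lemma \ref{orthogonal} then finishes the proof. The main obstacle is the general-position step, and this is precisely what Lemmas \ref{CPmanifold} and \ref{triangle} were designed to enable.
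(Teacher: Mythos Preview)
Your argument is correct and shares the paper's core strategy: classifying maps into projective space, then the PL general-position homotopy enabled by Lemmas~\ref{CPmanifold} and~\ref{triangle}, and finally Lemma~\ref{orthogonal} to orthogonalize. The one substantive difference is the endgame. The paper maps into $\mathbb{C}P^{n-1}$, so the line bundles sit inside $\Theta^n(X)$ from the outset; it constructs only $V_1,\dots,V_{n-1}$ via classifying maps and then \emph{defines} $V_n$ as the orthogonal complement, so that $\bigoplus_i V_i=\Theta^n(X)$ holds by construction and no further argument is needed. You instead map into $\mathbb{C}P^n$, build all $n$ bundles symmetrically as subbundles of $\Theta^{n+1}(X)$, and then invoke the $3$-connectedness of $B\mathrm{SU}(n)$ (equivalently, that $c_1$ classifies rank-$n$ complex bundles over CW complexes of dimension $\leq 3$) to conclude that the rank-$n$ direct sum is trivial. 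Your route treats the $V_i$ uniformly at the cost of importing this extra classification fact; the paper's route avoids that fact by singling out the last bundle. One minor point of phrasing: the PL general-position homotopy you cite will in general move the whole joint map $(f_1,\dots,f_{k+1})$ within $U_k\times\mathbb{C}P^m$, not only $f_{k+1}$ (this is how the paper states it as well); this is harmless, since the homotopy classes of the individual $f_i$, and hence the Chern classes, are unchanged.
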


\begin{proof}
By Lemma \ref{orthogonal}, it suffices to show that we can find line bundles $V_1, V_2, \dots, V_n$ over $X$ whose direct
sum is $\Theta^n(X)$ and that have the desired Chern classes.  Suppose we have $n-1$ line bundles $V_1, V_2, \dots, V_{n-1}$ that
span an $n-1$ dimensional subspace of $\mathbb{C}^n$ at each point of $X$, and suppose that $c_1(V_i) = \alpha_i$
for $1 \leq i < n$.  Define $V_n$ to be the orthogonal complement of $\bigoplus_{i=1}^{n-1}V_i$ in $\Theta^n(X)$.  Then
$V_1, V_2, \dots, V_n$ clearly satisfy (2) and (3), and by Lemma \ref{relations}, we have (1) as well.  Hence we need only
construct $V_i$ for $1 \leq i \leq n - 1$.

Next, we use the fact that for each line bundle $V$ over $X$, there exists a continuous map $f: X \rightarrow \mathbb{C}P^\infty$ 
such that $V$ is isomorphic to $f^*\gamma^1$, where $\gamma^1$ denotes the tautological line bundle over $\mathbb{C}P^\infty$
\cite[Theorem 14.6]{MS}.  Note that we can find such a map to
give any desired element of $H^2(X)$ as $c_1(f^*\gamma^1)$: Recall that $\mathbb{C}P^\infty$ is the classifying space both for
complex line bundles over $X$ and for $H^2(X)$, because $X$ is a CW complex (see \cite[Chapter 14]{MS} and
\cite[Example 4.50 and Theorem 4.57]{Ha}). So given any element $\beta\in H^2(X)$, there is a map $g: X\to \mathbb{C}P^\infty$ 
such that $\beta=g^*(\alpha)$ for $\alpha\in H^2(\mathbb{C}P^\infty)$ a generator. Because we also have $\alpha = c_1(\gamma^1)$, 
it follows by the naturality of the Chern classes that $g^*\gamma^1$ has the desired first Chern class.  The map $g$ has codomain
$\mathbb{C}P^\infty$, but by the $CW$ approximation theorem  \cite[Theorem 4.8]{Ha}, because\footnote{Although we are using here
that $\dim(X)\leq 3$, this is not really the critical application of the dimension condition in the proposition. If $\dim(X)$ were of higher 
dimension, we could still make the argument here by imposing an additional condition that $n$ be sufficiently large. The critical use
of $\dim(X)\leq 3$ is yet to come.} $\dim(X) \leq 3$, there is a map  $f:X\to \mathbb{C}P^\infty$ that is homotopic to $g$ and whose 
image is contained in the $3$-skeleton of $\mathbb{C}P^\infty$. The $3$-skeleton of $\mathbb{C}P^\infty$ is $\mathbb{C}P^1$, so the 
desired map to $\mathbb{C}P^{n-1}$ exists so long as $n\geq 2$. But if $n=1$, the hypotheses of the proposition ensure that the only
Chern class vanishes, and so the trivial map $X \rightarrow \mathbb{C}P^0 = \text{pt}$ suffices. Hence, for any class $\beta$ in $H^2(X)$, 
we have constructed a map $f: X \rightarrow \mathbb{C}P^{n-1}$ such that $f^*\gamma^1$ has $\beta$, which was chosen arbitrarily, 
as its first Chern class. 

Now let 
\[
f=(f_1, f_2, \dots, f_{n-1}): X \rightarrow \prod_{i=1}^{n-1}\mathbb{C}P^{n-1}
\]
be a map with the property that $c_1(f_i^*\gamma^1) = \alpha_i$ for each $1 \leq i \leq n - 1$. Then the bundles $V_i = f_i^*\gamma^1$ have the 
desired Chern classes.  We have to make sure that the $V_i(x)$ are linearly independent at each point of $x$. This will not necessarily be the case for 
an arbitrary $f$, but we will show that we can replace $f$ by a homotopic map $g$ that does have this property. Because a homotopy of $f$ 
restricts to a homotopy on each factor, our Chern classes will not be affected by the replacement of $f$ by $g$. 
For $k < n$, let $U_k$ denote the open set  consisting of points $(y_1,y_2, \dots, y_k)$
in $\prod_{i=1}^k \mathbb{C}P^{n-1}$ for which $\Span(y_1, y_2, \dots, y_k)$ has (complex) dimension $k$. 
Suppose we have a map $g: X \rightarrow U_k\times \mathbb{C} P^{n-1}$ for $k < n-2$. 
We will show that $g$ is homotopic to a map into $U_{k+1}$. 

We proceed by induction.  At the base step, we note that $U_1=\mathbb{C}P^1$, so any map into $\mathbb{C}P^1$ is a map
into $U_1$. Suppose now we have a map $g: X \rightarrow U_k\times \mathbb{C}P^{n-1}$ for $1\leq k \leq n - 2$. 
Let $S_{k+1}\subset U_k\times \mathbb{C}P^{n-1}$ consist of those $(y_1, y_2, \dots, y_{k+1})$ in $U_k\times \mathbb{C}P^{n-1}$ such that 
$y_{k+1}$ is in $\Span(y_1, y_2, \dots, y_k)$.  The set $U_{k+1}$ is the complement of $S_{k+1}$ in $U_k\times \mathbb{C}P^{n-1}$, 
and by Lemma \ref{CPmanifold}, we know that $S_{k+1}$ is a smooth submanifold of $U_k\times \mathbb{C}P^{n-1}$
with real dimension $2(nk-1)$.  And because $U_k$ is an open subset of $(\mathbb{C}P^{n-1})^k$,
the open submanifold $U_k\times \mathbb{C}P^{n-1}$ has real dimension $2(k+1)(n-1)=2(nk+n-k-1)$.
Therefore the real codimension of $S_{k+1}$ in $(\mathbb{C}P^{n-1})^{k+1}$ is $2(nk+n-k-1)-2(nk-1)=2(n-k)$. Because we have taken
$k \leq n - 2$, this codimension is greater than or equal to $4$. But $X$ has 
dimension\footnote{Here is where we use the assumption about $\dim(X)$!} 
less than or equal to $3$, so we are able to use a general position argument to homotope $g$ off of $S_{k+1}$.
Here are the technical details of that argument, which are slightly complicated by the fact that $X$ is a CW complex, but not
necessarily a manifold:  Because $U_k \times \mathbb{C}P^{n-1}$ is an open subset of the smooth manifold 
$(\mathbb{C}P^{n-1})^k$, it is itself a smooth manifold, and $S_{k+1}$ is a smooth submanifold. By Lemma \ref{triangle}, there is a smooth 
triangulation of $U_{k+1} \times \mathbb{C}P^{n-1}$ as a PL manifold for which $S_{k+1}$ is triangulated as a subcomplex.  If we choose a triangulation $T$ of
$U_k\times \mathbb{C}P^{n-1}$ \cite[Lemma 3.7]{Hu}, we can treat that triangulation as a CW complex and use the CW approximation theorem
\cite[Theorem 4.8]{Ha} to homotope $g$ to a map with image in the $3$-skeleton of the triangulation. By PL general position \cite[Lemma 4.6]{Hu}, 
we can then perform a homotopy so that the image of the  $3$-skeleton $T^3$  moves into general position with respect to $S_{k+1}$. But due to the dimensions, this means that the image of $T^3$ after the homotopy does not intersect $S_{k+1}$. Combining the homotopies of the 
cellular approximation and the restriction to the image of $X$ of the general position homotopy gives the desired homotopy of $g$.

We now complete the proof of the proposition.  Suppose our map 
\[
f=(f_1, f_2, \dots, f_{n-1})
\]
above has image in $U_k\times \prod_{i=1}^{n-k-1}\mathbb{C}P^{n-1} \subset \prod_{i=1}^{n-1}\mathbb{C}P^{n-1}$.  Then there is a homotopy
that is constant on the last $n-k-2$ coordinates between $f$ and, say, 
$\widetilde{f}=(\widetilde{f}_1,\widetilde{f}_2, \dots, \widetilde{f}_{k+1}, \widetilde{f}_{k+2}, \dots, \widetilde{f}_{n-1})$,
with image in $U_{k+1}\times \prod_{i=1}^{n-k-2}\mathbb{C}P^{n-1}$. By induction, we end up with a map with image in $U_{n-1}$, and this 
is the desired map with the property that the set of lines represented by the coordinates of each image point in 
$(\mathbb{C}P^{n-1})^{n-1}$ spans an $n-1$ dimensional subspace of $\mathbb{C}^n$ and has the desired collection of Chern classes. 
\end{proof}

\begin{proof}[Proof of Theorem \ref{lowdim}]
Choose a continuous ordering $\lambda_1, \lambda_1, \dots, \lambda_n$ of the zeros of $\mu$.   
As we noted in Section \ref{prelims}, given a normal matrix $A$ in $\M(n, C(X))$ with characteristic polynomial $\mu$ satisfying the
hypotheses of the theorem, the obstruction to $A$ being diagonalizable lives in $\bigoplus_{i=1}^n c_1(V_i)$.  
By Lemma \ref{relations}, we see that 
\[
c_1(V_n) = -c_1(V_1) -c_1(V_2) - \cdots -c_1(V_{n-1}),
\]
so it suffices to show we can realize any $(n - 1)$-tuple of elements in $H^2(X)$.  Take elements 
$\alpha_1, \alpha_2, \dots, \alpha_{n-1}$ in $H^2(X)$, and set $\alpha_n = -\alpha_1 - \alpha_2 - \cdots - \alpha_{n-1}$.
By Proposition \ref{bundles}, there exist pairwise orthogonal line bundles $V_1, V_2, \dots, V_n$ over $X$ such
that $c_1(V_i) = \alpha_i$ for $1 \leq i \leq n$.  As in Section \ref{prelims}, define $P_i$ to be the orthogonal projection 
from $\Theta^n(X)$ to $V_i$.  Then
\[
A = \lambda_1P_1 + \lambda_2P_2 + \cdots + \lambda_nP_n
\]
is a normal operator whose obstruction to diagonalizability is $\bigoplus_{i=1}^n\alpha_i$.
\end{proof}

Proposition \ref{CPm} shows that we cannot, in general, extend Theorem \ref{lowdim} to $CW$ complexes with dimension greater than $3$.  
The proof of Proposition \ref{bundles} demonstrates a geometric reason for this dimension limit.

\section{Chern-Weil formulas}
In this final section we show how one can compute the invariant $\theta(A, B)$, up to torsion, when $A$ and $B$ are matrices over a smooth 
manifold $X$ that have split characteristic polynomial.  We do this by employing an operator-algebraic approach to classical Chern-Weil theory.  We refer to the reader to  
\cite[Section 4.3]{P} for complete details, but here are the salient points:  Suppose $V$ is a smooth complex vector bundle over a smooth
manifold $X$.  Embed $V$ in a trivial bundle $\Theta^n(X)$, give $\Theta^n(X)$ its standard Hermitian structure, and let $P \in \M(n, C(X))$ be
the orthogonal projection from $\Theta^n(X)$ to $V$.  Let $dP$ denote the matrix of one--forms obtained by applying the
exterior derivative $d$ to each entry of $P$.  Then $\frac{1}{2\pi i}\tr(PdPdP)$ is a closed two-form whose class $H^2_{de R}(X)$ 
in the de Rham cohomology of $X$ is $c_1(V)$ (the scaling factor $\frac{1}{2\pi i}$, which is not present in \cite{P}, is to give us
a class with integer coefficients).

Suppose $A$ is a normal multiplicity-free matrix over a smooth manifold $X$, that the characteristic polynomial of $A$ splits over
$C(X)$, and that the matrix entries of $A$ are smooth functions.  As usual, we write
\[
A = \lambda_1P_1 + \lambda_2P_2 + \cdots + \lambda_n P_n,
\]
with $P_i$ the orthogonal projection onto the line subbundle $V_i$ of $\Theta^n(X)$ that is the $\lambda_i$-eigenbundle.  Then for
each $1 \leq i \leq n$, the cohomology class $\left[\frac{1}{2\pi i}\tr(P_idP_idP_i)\right]$ 
is a diagonalizability obstruction in $H^2_{de R}(X)$.

\begin{example} The diagonalizabilty obstruction of a  matrix over $S^2$
\end{example}\label{onematrix}

Take $S^2$ to be the unit sphere in $\mathbb{R}^3$ and define
\[
A = \begin{pmatrix}
x^2 + x^3 + y^2 + xy^2 + i(1 - x)z^2& (y + iz)(x^2 + y^2 - iz^2) \\
(y - iz)(x^2 + y^2 - iz^2) & x^2 - x^3 + y^2 - xy^2  + i(1 + x)z^2
\end{pmatrix}.
\]
It is straightforward to check that $A$ is normal and that $\tr A  = 2(x^2 + y^2 + iz^2)$.  To compute the determinant of $A$,
 substitute $1 - x^2 - y^2$ for $z^2$; an involved computation yields
\begin{multline*}
\det A = -1 + (3 + 2i)x^2 - 2x^4 - 2ix^6 + (3 + 2i)y^2 \\ - 4x^2y^2 - 6ix^4y^2   - 2y^4  
- 6ix^4y^2 - 2iy^6 + z^2 \\
-(2 - 2i)x^2z^2 - 2ix^4z^2  - (2 - 2i)y^2z^2 - 4ix^2y^2z^2 - 2iy^4z^2.
\end{multline*}
The real part of this expression is 
\[
-1 + 3x^2 - 2x^4 + 3y^2 - 4x^2y^2 - 2y^4 + z^2 - 2x^2z^2 - 2y^2z^2;
\]
substituting $1 - x^2 - y^2$ for $z^2$ again, we find that the real part vanishes.  The imaginary part of $\det A$ is 
\[
2x^2 - 2x^6 + 2y^2 - 6x^4y^2 - 6x^2y^4  - 2y^6 +2x^2z^2 - 2x^4z^2 + 2y^2z^2 - 4x^2y^2z^2 - 2y^4z^2,
\]
and if we once again replace $z^2$ by $1 - x^2 - y^2$, the imaginary part of the determinant simplifies to
\[
4(x^2 - x^4 + y^2 - y^4 - 2x^2y^2)  = 4(x^2 + y^2)(1 - x^2 - y^2) = 4(x^2 + y^2)z^2.
\]
Therefore $A$ has characteristic polynomial
\begin{align*}
\mu_A(\lambda) &= \lambda^2 - 2(x^2 + y^2 + iz^2)\lambda + 4i(x^2 + y^2)z^2 \\
&= \Bigl(\lambda - 2(x^2 + y^2)\Bigr)\Bigl(\lambda - 2iz^2\Bigr).
\end{align*}
Set $\lambda_1(x, y, z) = 2x^2 + 2y^2$ and $\lambda_2(x, y, z) = 2iz^2$.  Using formula \eqref{E: projection equation} from Section \ref{prelims},
\[
P_1 = \frac{1}{2x^2 + 2y^2 - 2iz^2}\cdot(A - 2iz^2I)
=  \frac{1}{2}\begin{pmatrix} 1 + x & y + iz \\ y - iz & 1 - x\end{pmatrix}.
\]

To simplify our computations, convert to polar coordinates:
\[
P_1 = \frac{1}{2}\begin{pmatrix} 1 + \sin\phi\cos\theta & \sin\phi\sin\theta + i\cos\phi \\
\sin\phi\sin\theta - i\cos\phi & 1 - \sin\phi\cos\theta\end{pmatrix}.
\]

We have
\[
dP_1 = \frac{1}{2}\begin{pmatrix} \omega_{11} & \omega_{12} \\ \omega_{21} & \omega_{22}\end{pmatrix},
\]
with
\begin{align*}
\omega_{11} &= d(1 + \sin\phi\cos\theta) = -\sin\phi\sin\theta\,d\theta + \cos\phi\cos\theta\,d\phi \\
\omega_{12} &= d(\sin\phi\sin\theta + i\cos\phi) = \sin\phi\cos\theta\,d\theta + (\cos\phi\sin\theta - i\sin\phi)\,d\phi \\
\omega_{21} &= d(\sin\phi\sin\theta + i\cos\phi) = \sin\phi\cos\theta\,d\theta + (\cos\phi\sin\theta + i\sin\phi)\,d\phi \\
\omega_{22} &= d(1 + \sin\phi\cos\theta) = \sin\phi\sin\theta\,d\theta - \cos\phi\cos\theta\,d\phi,
\end{align*}
Direct computation gives us
\[
dP_1dP_1 = \frac{1}{2} \begin{pmatrix}
i\sin^2\phi\cos\theta & i\sin^2\phi\sin\theta - \sin\phi\cos\phi \\
 i\sin^2\phi\sin\theta + \sin\phi\cos\phi & -i\sin^2\phi\cos\theta \end{pmatrix}d\theta d\phi,
\]
and 
\[
P_1dP_1dP_1 = \frac{1}{4}\begin{pmatrix} f_{11} & f_{12} \\ f_{21} & f_{22}\end{pmatrix}d\theta d\phi,
\]
with
\begin{align*}
f_{11} &= (1 + \sin\phi\cos\theta)(i\sin^2\phi\cos\theta) + (\sin\phi\sin\theta + i\cos\phi)(i\sin^2\phi\sin\theta + \sin\phi\cos\phi) \\
&= i\sin\phi(1 + \sin\phi\cos\theta)
\end{align*}
and
\begin{align*}
f_{22} &= (\sin\phi\sin\theta - i\cos\phi)(i\sin^2\phi\sin\theta - \sin\phi\cos\phi) + (1 - \sin\phi\cos\theta)(-i\sin^2\phi\cos\theta)\\
&= i\sin\phi(1 - \sin\phi\cos\theta).
\end{align*}
Therefore
\[
\tr(P_1dP_1dP_1) = \frac{1}{4}\bigl(i\sin\phi(1 + \sin\phi\cos\theta) + i\sin\phi(1 - \sin\phi\cos\theta)\bigr)d\theta d\phi
= \frac{1}{2}i\sin\phi\, d\theta d\phi.
\]
The class $\bigl[\frac{1}{2\pi i}\cdot\frac{1}{2}i\sin\phi\, d\theta d\phi\bigr] = 
\bigl[\frac{1}{4\pi}\sin\phi\, d\theta d\phi\bigr]$ is nontrivial in $H^2_{de R}(S^2)$, because
\[
\int_0^\pi\int_0^{2\pi}\frac{1}{4\pi}\sin\phi\, d\theta d\phi = 1 \neq 0,
\]
and thus we see that $A$ is not diagonalizable.    

We can do a similar computation to compute $[\frac{1}{2\pi i}\tr(P_2 dP_2dP_2)]$ for 
\[
P_2 = \frac{1}{2iz^2 - (2x^2 + 2y^2)}\cdot(A - (2x^2 + 2y^2)I)
=  \frac{1}{2}\begin{pmatrix} 1 - x & -y - iz \\ -y + iz & 1 + x\end{pmatrix},
\]
but it is simpler to use the fact that the sum of the first Chern classes of the eigenvalue bundles of $A$ is zero, so
\[
\left[\frac{1}{2\pi i}\tr(P_2 dP_2dP_2)\right] = - \left[\frac{1}{2\pi i}\tr(P_1 dP_1dP_1)\right] 
= -\left[\frac{1}{4\pi}\sin\phi\, d\theta d\phi\right]
\]
in $H^2_{de R}(S^2)$.
\newline

Now suppose $A$ and $B$ are multiplicity-free normal matrices over $X$ that have the same characteristic polynomial $\mu$,
and suppose $\mu$ splits over $C(X)$.  Then we can construct a Chern-Weil-type formula for $\theta(A, B)$.  To do this, we first
suppose that $V$ and $W$ are one-dimensional vector subspaces of $\mathbb{C}^n$.  Equip $\mathbb{C}^n$ with its standard orthonormal basis,
and let $P$ and $Q$ be the matrices that represents the orthogonal projection of $\mathbb{C}^n$ onto $V$ and $W$ respectively.  
Define $\Psi: \M(n, \mathbb{C}) \rightarrow \M(n, \mathbb{C})$ by the formula
\[
\Psi(T) = QTP.
\]
Note that $\Psi$ is the projection of all linear transformations of $\mathbb{C}^n$ onto linear maps from $\Ran P = V$ to $\Ran Q = W$.
Next, for $1 \leq i, j \leq n$, let $e_{ij}$ denote the matrix that is $1$ in the $(i, j)$ slot and zero elsewhere.  We can 
make the vector space $\M(n, \mathbb{C})$ into a finite dimensional Hilbert space by decreeing 
\[
e_{11}, e_{12}, \dots, e_{1n}, e_{21}, e_{22}, \dots, e_{2n}, e_{31}, \dots, e_{n,n-1}, e_{nn}
\]
to be an orthonormal basis.  Careful bookkeeping yields that for each $1 \leq k, \ell \leq n$, 
\[
\Psi(e_{k\ell}) = \sum_{i,j=1}^nq_{ik}p_{\ell j}e_{ij}.
\]
In terms of our ordered orthonormal basis, the linear map $\Psi$ has matrix
\[
\begin{pmatrix}
q_{11}p_{11} &\cdots & q_{11}p_{n1} &\cdots & q_{1n}p_{11} & \cdots & q_{1n}p_{n1} \\
\vdots & \ddots & \vdots &  & \vdots & \ddots & \vdots \\
q_{11}p_{1n} &\cdots & q_{11}p_{nn} &\cdots & q_{1n}p_{1n} & \cdots & q_{1n}p_{nn} \\
\vdots & & \vdots &  & \vdots &  & \vdots \\
q_{n1}p_{11} &\cdots & q_{n1}p_{n1} &\cdots & q_{1n}p_{11} & \cdots & q_{1n}p_{n1} \\
\vdots & \ddots & \vdots &  & \vdots & \ddots & \vdots \\
q_{n1}p_{1n} &\cdots & q_{n1}p_{nn} &\cdots & q_{nn}p_{1n} & \cdots & q_{nn}p_{nn}
\end{pmatrix}.
\]
Letting $P^T$ denote the transpose of $P$, we can write our matrix for $\Psi$ in block form:
\[
\begin{pmatrix}
q_{11}P^T & q_{12}P^T & \cdots & q_{1n}P^T \\
q_{21}P^T & q_{22}P^T & \cdots & q_{2n}P^T \\
\vdots & \vdots & \ddots & \vdots \\
q_{n1}P^T & q_{n2}P^T & \cdots & q_{nn}P^T
\end{pmatrix}.
\]
This is called the \emph{Kronecker product} of $Q$ and $P^T$, and is typically written $Q \otimes P^T$.
\vskip 6pt

Now suppose that $V$ and $W$ are complex line bundles over $X$ and that $P$ and $Q$ are the projections
of $\Theta^n(X)$ to $V$ and $W$, respectively.  Then we can define $R := Q \otimes P^T$ pointwise.  Using 
easily-verified properties of the Kronecker product and transpose \cite[Section 2.3]{G}, we check that
\[
R^2 = (Q \otimes P^T)(Q \otimes P^T) = (Q^2 \otimes (P^T)^2) = (Q^2 \otimes (P^2)^T) = Q \otimes P^T = R
\]
and
\[
R^* = (Q \otimes P^T)^* = Q^* \otimes (P^T)^* = Q^* \otimes (P^*)^T = Q \otimes P^T = R.
\]
Thus $R$ is a projection, and by construction is the projection from $\Theta^n(X)$ onto $\Hom(V, W)$.  Therefore
\[
c_1(\Hom(V, W)) =  \left[\frac{1}{2\pi i}\tr(R\, dR\, dR)\right]
\]
in de Rham cohomology.

\begin{example} The invariant for a pair of matrices over $S^2$
\end{example}
Let $A = \lambda_1P_1 + \lambda_2P_2$ be the matrix from Example 5.1, and define
\[
B = \begin{pmatrix} 
x^2 - x^2z + y^2 - y^2z + iz^2(z + 1) & (x + iy)(ix^2 + iy^2 + z^2) \\
 (-x + iy)(ix^2 + iy^2 + z^2) & x^2 + x^2z + y^2 + y^2z + iz^2(z - 1) \end{pmatrix}.
\]
Then $B$ is a normal matrix over $S^2$, and by computations similar to those we performed for the matrix $A$ in Example 5.1,
we obtain
\[
\mu_B(\lambda) = \lambda^2 - 2(x^2 + y^2 + iz^2)\lambda + 4i(x^2 + y^2)z^2 = \mu_A(\lambda).
\]
Write $B = \lambda_1Q_1 + \lambda_2Q_2$.  Then 

\[
Q_1 = \frac{1}{2}\begin{pmatrix} 1 - z & -y + ix \\ -y - ix & 1 + z\end{pmatrix},
\]
and the Kronecker product of $Q_1$ and the transpose of $P_1$ is the matrix

\[
R_1= \frac{1}{4}
\begin{pmatrix}
(1 - z)(1 + x) & (1 - z)(y - iz) & (-y + ix) (1 + x) & (-y + ix)(y - iz) \\
(1 - z)(y + iz) & (1 - z)(1 - x) & (-y + ix)(y + iz) & (-y + ix)(1 - x) \\
(-y - ix)(1 + x) & (-y - ix)(y - iz) & (1 + z)(1 + x) & (1 + z)(y - iz) \\
(-y - ix)(y + iz) & (-y - ix)(1 - x) & (1 + z)(y + iz)& (1 + z)(1 - x)
\end{pmatrix}
\]
Using \cite{Bo} and converting to polar coordinates, we obtain
\[
\tr(R_1\, dR_1\, dR_1) = i(z\, dx\, dy - y\, dx\, dz + x\, dy\, dz) = -i\sin\phi\, d\theta\, d\phi.
\]
Thus
\[
\int_{S^2} \frac{1}{2\pi i}\tr(R_1\, dR_1\, dR_1) =
\frac{1}{2\pi i}\int_0^\pi\int_0^{2\pi}-i\sin\phi\, d\theta d\phi = -2 \neq 0,
\]
and therefore $A$ and $B$ are not unitarily equivalent.  Similarly, one can show that 
\[
\tr(R_2\, dR_2\, dR_2) = i\sin\phi\, d\theta\, d\phi.
\]

\end{document}